\documentclass{amsart}

\usepackage[T1]{fontenc}
\usepackage{amsmath}
\usepackage{amsthm}
\usepackage{amsfonts}
\usepackage{amssymb}
\usepackage{enumerate}
\usepackage{mathrsfs}
\usepackage{multicol}
\usepackage{graphicx}
\usepackage{hyperref}
\usepackage[labelfont=rm]{caption}
\usepackage{subcaption}
\usepackage{tikz}
\usepackage{cleveref} 
\usepackage{enumitem}

\newif\iftikziii
\tikziiitrue

\iftikziii
\usetikzlibrary{matrix,calc,arrows.meta,knots,intersections,decorations.markings,cd}
\else
\usetikzlibrary{matrix,calc,knots,intersections,decorations.markings}
\fi

\newcommand\QQ{{\mathbb Q}}
\newcommand\CC{{\mathbb C}}
\newcommand\bn{{\mathbb N}}
\newcommand\PP{{\mathbb P}}
\newcommand\BB{{\mathbb B}}
\newcommand\bc{{\mathbb C}}
\newcommand\bp{{\mathbb P}}
\newcommand\bz{{\mathbb Z}}
\newcommand\cC{{\mathcal C}}
\newcommand\ZZ{{\mathbb Z}}
\newcommand\TT{{\mathbb T}}
\newcommand\cL{{\mathcal L}}
\newcommand\EE{{\mathbb E}}
\newcommand\bt{{\overline{\tau}}}

\DeclareMathOperator\PGL{\textrm{PGL}}
\DeclareMathOperator\im{Im}
\DeclareMathOperator\Hom{\textrm{Hom}}

\newtheorem{thm}{Theorem}[section]
\newtheorem{prop}{Proposition}[section]
\newtheorem{cor}{Corollary}[section]
\newtheorem{lema}{Lemma}[section]
\newtheorem{obs}{Remark}[section]
\theoremstyle{definition}

\newtheorem{question}{Question}[section]

\title{Triangular curves and cyclotomic Zariski tuples}
\author[E. Artal]{Enrique Artal Bartolo}

\author[J.I. Cogolludo]{Jos{\'e} Ignacio Cogolludo-Agust{\'i}n}
\address{Departamento de Matem\'aticas, IUMA\\ 
Universidad de Zaragoza\\ 
C.~Pedro Cerbuna 12\\ 
50009 Zaragoza, Spain} 
\email{artal@unizar.es,jicogo@unizar.es} 

\author[J.~Mart\'{i}n-Morales]{Jorge Mart\'{i}n-Morales}
\address{Centro Universitario de la Defensa-IUMA \\
Academia General Militar \\
Ctra.~de Huesca s/n. 50090, Zaragoza, Spain}
\email{jorge@unizar.es}
\urladdr{\url{http://cud.unizar.es/martin}}

\thanks{Partially supported by MTM2016-76868-C2-2-P and 
Gobierno de Arag{\'o}n (Grupo de referencia ``{\'A}lgebra y Geometr{\'i}a'')
cofunded by Feder 2014-2020 ``Construyendo Europa desde Arag\'on''.
The third author is also partially supported by FQM-333 ``Junta de Andaluc{\'\i}a''.}

\subjclass[2010]{14N20, 32S22, 14F35, 14H50, 14F45, 14G32}  

\keywords{Zariski pairs, arithmetic Zariski pairs, linking numbers}

\begin{document}
\begin{abstract}
The purpose of this paper is to exhibit infinite families of conjugate projective curves in a 
number field whose complement have the same abelian fundamental group, but are non-homeomorphic. 
In particular, for any $d\geq 4$ we find Zariski $\left(\left\lfloor\frac{d}{2}\right\rfloor+1\right)$-tuples 
parametrized by the $d$-roots of unity up to complex conjugation. As a consequence, for any divisor $m$ of $d$, 
$m\neq 1,2,3,4,6$, we find arithmetic Zariski $\frac{\phi(m)}{2}$-tuples with coefficients in the corresponding 
cyclotomic field. These curves have abelian fundamental group and they are distinguished using a linking invariant. 
\end{abstract}
\maketitle

\section*{Introduction}
Let us consider an algebraic variety $V$ admitting a set of equations whose coefficients belong in a number field,
$E\supset \QQ$. Consider $\sigma:E\to E$ a Galois transformation of $E$, then $\sigma$ acts on the defining equations
of $V$ to produce another algebraic variety $V^\sigma$ which is classically known as a \emph{conjugate} variety of $V$.
The same concept can be extended to any scheme. Note that the extension of $\sigma$ to $\CC$ is not necessarily a 
homeomorphism and hence $V$ and $V^\sigma$ are not necessarily homeomorphic. 
In~\cite{Serre-Examples}, Serre gave the first such example for a smooth surface whose conjugate has a non-isomorphic
fundamental group. Abelson~\cite{Abelson-conjugate} also found such examples for smooth projective varieties with isomorphic 
fundamental groups but different homotopy types. In addition, he also gave examples of conjugate, smooth 
homotopy equivalent, quasiprojective varieties that are not homeomorphic. Other examples by different authors can be 
found in~\cite{Artal-Carmona-ji-effective,Shimada-non-homeomorphic,Milne-Suh-Non-homeomorphic,Sh:18,Sh:19}.
In the previous examples either fundamental groups are non abelian or they had not been explicitly calculated.

In this paper we present examples of conjugate plane algebraic curves, or equivalently conjugate complements of curves,
whose fundamental groups are abelian (and hence isomorphic) yet they are not homeomorphic. 

In particular, we construct a family of irreducible plane projective curves whose deformation space is parametrized by 
the roots of unity of any given order $d$ up to complex conjugation. Associated with this equisingular space, another
family is constructed whose deformation space is also parametrized by the roots of unity. In this paper we focus on 
the topology both of their embedding and their complements. We prove that curves from different strata have both different 
embeddings and different topological types of their complements. This is in contrast to the simplicity of the fundamental
group of their complements, which is abelian. 

Particular examples of birationally equivalent curves, such as the union of smooth curves with three lines at maximal 
order flexes have already been considered in the literature~\cite{CC:08}, however not necessarily for their topological 
properties. The topology of their embedding was also recently studied by T.~Shirane in~\cite{Sh:18}, where a splitting 
graph is used (a generalization of an invariant introduced in~\cite{afg:17}). Shirane also extended this study to the 
union of smooth curves and any three non-concurrent lines in~\cite{Sh:19}. Some of our computations will apply to these 
curves as well showing that their complements are not homeomorphic and their fundamental groups are still abelian.
The invariant we are using is not original of this paper, see~\cite{ea:Toulouse,afg:17,benoit-meilhan,ben-shi}. 
Up to now, this invariant has been considered as an invariant of the pair given by the projective plane and the curve. 
In this work, we show that in good conditions it is an invariant of the complement.

An outline of the paper is as follows: in \S~\ref{sec:construction} we give a description of three families of curves 
whose equisingular space is non-connected and their connected components are parametrized by roots of unity of their
degree up to complex conjugation. Next, \S~\ref{sec:Zariski-tuples} is devoted to showing when curves in these families 
have different embeddings by means of a linking invariant (first defined in \cite{afg:17} by the first author and 
Florens-Guerville) that is sensitive to the embedding of a curve rather than just the fundamental group of its complement.
The purpose of \S~\ref{sec:pi1} is to explicitly describe different members of the different equisingular strata and
calculate the fundamental groups of their complements. Finally, in \S~\ref{sec:consecuencias} we extend our techniques
to the family studied by Shirane who showed that curves in different connected components of the equisingular strata 
have different embeddings. These families share a similar behavior, that is, their complements are also non-homeomorphic
yet their fundamental groups are abelian.

\subsection*{Acknowledgments}
The second and third authors want to thank the Fulbright Program (within the Jos\'e Castillejo and Salvador de Madariaga 
grants by Ministerio de Educaci\'on, Cultura y Deporte) for their financial support while writing this paper. They also want
to thank the University of Illinois at Chicago, especially Anatoly Libgober and Lawrence Ein for their warmth welcome and
support in hosting them.

\section{Construction of an equisingular family of curves}\label{sec:construction}

The purpose of this section is to construct three families of curves whose equisingular strata are not connected.
Such families have the property that conjugate curves belong to different connected components.

\subsection{The equisingular family \texorpdfstring{$\Sigma^{(d)}$}{Sigmad} of irreducible cuspidal curves of type
\texorpdfstring{$(d,d+1)$}{(d,d+1)}}
\mbox{}

For any given $d\in\bn$, $d>1$ let us denote by $\Sigma^{(d)}$ the realization space of (irreducible) 
plane projective curves in $\PP^2=\PP_\CC^2$ of degree~$2d$ having exactly three singular points whose local topological 
type is $u^d+v^{d+1}=0$. Such curves have genus 
$\binom{d-1}{2}$ and a classical example is the family $\Sigma^{(2)}$ 
of tricuspidal quartics, whose fundamental group of the complement is the braid group $\BB_3(\PP^1)$ on three strings on the 
sphere~$\mathbb S^2$ as was already known to Zariski~\cite{zr:29,Zariski-rational}.

Before we go any further note that $\Sigma^{(d)}\subset \PP^{N_d}$ for $N_d=\frac{d(d+3)}{2}$ is a smooth quasi-projective variety
and that the group $\PGL(3;\bc)$ of projective automorphisms of $\PP^2$ acts on $\Sigma^{(d)}$, maybe not freely. However, since
$\PGL(3;\bc)$ is a connected topological group, the number of connected components of $\Sigma^{(d)}$ and its quotient
$\PGL(3;\bc)\backslash\Sigma^{(d)}$ coincide. Moreover, since $\Sigma^{(d)}$ is smooth and quasi-projective, this number is also 
the number of irreducible components of these spaces. 

The following are properties of any curve $\cC\in\Sigma^{(d)}$:
\begin{enumerate}
 \item\label{prop:1}
 Since the singular points are locally irreducible, the curve~$\cC$ is \emph{globally} irreducible.
 \item\label{prop:2} 
 Let $P_x,P_y,P_z$ denote the singular points of $\cC$ and $L_z$ the line joining $P_x,P_y$. 
 Since the local intersection number of two curves at a point is at least the product of their multiplicities, 
 we have
\[
2d=\cC\cdot L_z\geq (\cC\cdot L_z)_{P_x}+(\cC\cdot L_z)_{P_y}
\geq d+d\Longrightarrow
\begin{cases}
\cC\cap L_z=\{P_x,P_y\}, \textrm{ and }\\
\cC\pitchfork_{P_x} L_z,\cC\pitchfork_{P_y} L_z.
\end{cases}
\]
Hence, $L_z$ is not tangent to $\cC$ at $P_x$ and $P_y$. Analogously occurs with $L_x$ (resp.~$L_y$) 
the line joining $P_y,P_z$ (resp.~$P_x,P_z$). Moreover, since they intersect pairwise at three different points, 
we deduce that $L_x$, $L_y$, and $L_z$ are three distinct and non-concurrent lines.
 \item\label{prop:3} 
 By the previous comment, there is a projective transformation~$\Phi$ such that $\Phi(L_x)=\{x=0\}$, $\Phi(L_y)=\{y=0\}$,
 and $\Phi(L_z)=\{z=0\}$ and thus $\Phi(P_x)=[1:0:0]$, $\Phi(P_y)=[0:1:0]$ and $\Phi(P_z)=[0:0:1]$. Hence, up to a projective
 change of coordinates, one can assume that $\cC$ satisfies these properties. Moreover, the projective transformation is not
 unique, that is, any diagonal projective automorphism can be further applied to $\cC$. 
\end{enumerate}

\begin{figure}[ht]
\begin{center}
\begin{tikzpicture}[scale=.5,vertice/.style={draw,circle,fill,minimum size=0.1cm,inner sep=0}]
\draw[fill, color=blue!10!white] (0,5)--(5,5)--(5,0)--cycle;
\draw[help lines,step=.5,dotted] (0,0) grid (10,10);
\draw[very thick] (0,10)--(0,0)--(10,0);
\foreach \x in {1,...,10}
 \node[vertice] at (\x/2,5) {};
\foreach \y in {1,...,10}
 \node[vertice] at (5,\y/2) {};
\foreach \z in {1,...,10}
 \node[vertice] at (5-\z/2,\z/2) {};
\node[anchor=north] at (3,-.5) {$\ldots$};
\node[anchor=north] at (5,-.1) {$(d,0)$};
\node[anchor=north] at (7.5,-.5) {$\ldots$};
\node[anchor=north] at (10,-.1) {$(2d,0)$};
\node[anchor=north east] at (0,0) {(0,0)};
\node[anchor=east] at (-.25,3) {$\vdots$};
\node[anchor=east] at (-.1,5) {$(0,d)$};
\node[anchor=east] at (-.25,7.5) {$\vdots$};
\node[anchor=east] at (-.1,10) {$(0,2d)$};
\draw[very thick,dashed] (10,0)--(0,10); 
\node[vertice] (a05) at (0,5) {};
\node[vertice] (a55) at (5,5) {};
\node[vertice] (a50) at (5,0) {};
\draw[very thick] (0,5)--(5,5)--(5,0)--cycle;
\draw[thick] (1,4.5)--(4.5,4.5)--(4.5,1)--cycle;
\end{tikzpicture}
\caption{Newton polygon}
\label{fig:np}
\end{center}
\end{figure}
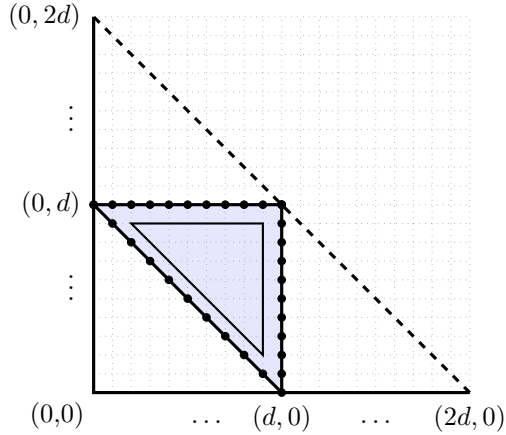

Let $F(x,y,z)=0$ be the equation of a curve~$\cC\in \Sigma^{(d)}$ satisfying these properties. Our purpose is 
to give the conditions the homogeneous polynomial $F$ of degree $2d$ should satisfy. A very useful tool to do 
this is to consider the Newton polygon of $f(x,y)=F(x,y,1)=\sum_{0\leq i+j\leq 2d} a_{ij}x^iy^j$, 
$$
N(f)=\textrm{ConvexHull}(\{(i,j)\in \ZZ^2_{\geq 0}\mid a_{ij}\neq 0\}).
$$
As a general restriction, since $F$ is a homogeneous polynomial of degree $2d$, $N(f)\subset T((0,0),(2d,0),(0,2d))$,
the triangle, or the convex hull of the three points. Moreover, since $\cC$ has multiplicity $d$ at $P_x$, $P_y$,
$P_z$ and the axes are not tangent, the Newton polygon is the triangle $T((0,d),(d,d),(d,0))$ of Figure~\ref{fig:np}. 
The transversality of the axes at the singular points implies that the vertices of this triangle correspond
to non-zero coefficients; we assume that the coefficient of $x^d y^d$ equals~$1$.

Moreover, since the Zariski tangent cone at the singularities has only one direction, each of the (quasi)-homogeneous
polynomials of $F$ associated with the sides of $N(f)$ must be a $d$-power of type $z^d(a_zx+b_zy)^d=(b_zz)^d(\lambda x+y)^d$,
$y^d(a_yz+b_yx)^d=(a_yy)^d(z+\mu x)^d$, and $x^d(a_xy+b_xz)^d=(b_xx)^d(\gamma y+z)^d$. According to property~\eqref{prop:3} above 
one can further assume that $b_z=\mu=\gamma=1$. Moreover, since the coefficient of $x^dy^d$ is 1, one has $a_y^d=b_x^d=1$ 
and hence the polynomial associated with the horizontal edge is $y^d(x+z)^d$, the one associated with the vertical edge is 
$x^d(y+z)^d$ and the one associated with the diagonal edge is $z^d(y+\zeta x)^d$, where $\zeta^d=1$ since $\lambda^d$ is the 
coefficient of $x^dz^d$ which is 1 in $x^d(y+z)^d$. 
Note that the isomorphism $[x:y:z]\mapsto[y:x:z]$ transforms $z^d(y+\zeta x)^d$ in $z^d(y+\bar{\zeta}x)^d$ after an appropriate 
diagonal isomorphism. Hence one obtains the following result.

\begin{prop}
\label{prop:sigma}
The realization set~$\Sigma^{(d)}$ decomposes in $\left(\left\lfloor\frac{d}{2}\right\rfloor+1\right)$ irreducible 
(and connected) components
$\Sigma^{(d)}_\zeta$ parametrized by 
$\TT_d^+:=\{\zeta\in\bc\mid\zeta^d=1,\im(\zeta)\geq 0\}$.
\end{prop}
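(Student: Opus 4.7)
The preceding analysis associates to every $\cC \in \Sigma^{(d)}$, after projective normalization, a parameter $\zeta$ satisfying $\zeta^d=1$, read off from the diagonal side polynomial $z^d(y+\zeta x)^d$ of the equation $F$. The plan is to package this into a well-defined locally constant invariant on $\Sigma^{(d)}$ whose fibers are exactly the connected components.

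I would first argue well-definedness. The normalization from the preceding discussion is ambiguous only by the diagonal subgroup of $\PGL(3;\bc)$ together with the $S_3$-action permuting the three coordinate axes. The condition $a_{dd}=1$ eliminates the diagonal ambiguity, and the transposition $[x:y:z]\mapsto[y:x:z]$ is shown in the text to take $\zeta$ to $\bar\zeta$ after a compensating diagonal rescaling. A direct computation for the other two transpositions yields the same involution $\zeta \leftrightarrow \bar\zeta$, while the $3$-cycles act trivially. Hence $\zeta$ descends to a well-defined map $\Sigma^{(d)}\longrightarrow\TT_d^+$. Its local constancy is immediate: on any open subset of $\Sigma^{(d)}$ admitting a local section of the $\PGL(3;\bc)$-action, $\zeta$ depends algebraically on the coefficients of the normalized equation but takes values in a finite set, so it must be constant on each connected component.

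Next, I would establish irreducibility of each fiber $\Sigma^{(d)}_\zeta$. Let $V_\zeta$ be the affine space of degree-$2d$ polynomials $F$ whose Newton polygon lies in $T((0,d),(d,d),(d,0))$, with prescribed vertex coefficient $a_{dd}=1$ and prescribed side polynomials $y^d(x+z)^d$, $x^d(y+z)^d$, $z^d(y+\zeta x)^d$. Its dimension is the number of interior lattice points of the triangle, namely $\binom{d-1}{2}$ (by Pick's theorem). Let $U_\zeta \subset V_\zeta$ be the subset for which $F$ actually defines a curve in $\Sigma^{(d)}$. Having cusps of local type $u^d+v^{d+1}$ at the three vertices is a Zariski-open condition, so $U_\zeta$ is open in the irreducible $V_\zeta$ and therefore irreducible once non-empty. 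The morphism
\[
\PGL(3;\bc) \times U_\zeta \longrightarrow \Sigma^{(d)}_\zeta,\qquad (\Phi, F) \longmapsto F\circ\Phi^{-1},
\]
is surjective by the normalization argument, and its source is irreducible; hence $\Sigma^{(d)}_\zeta$ is irreducible, and being smooth quasi-projective it is connected. Combined with the local constancy of $\zeta$, this identifies the set of connected components of $\Sigma^{(d)}$ with the image $\zeta(\Sigma^{(d)})\subset\TT_d^+$.

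The principal obstacle is showing that $U_\zeta$ is non-empty for every $\zeta\in\TT_d^+$: the Newton-polygon data enforce the correct multiplicity and the tangent-cone direction at each of $P_x, P_y, P_z$, but upgrading this to the full topological type $u^d+v^{d+1}$ at all three cusps simultaneously is not automatic. This existence will be settled by the explicit family of representative curves exhibited in Section~\ref{sec:pi1}, producing one curve for each value $\zeta\in\TT_d^+$; counting these realizations gives the announced $\left\lfloor\frac{d}{2}\right\rfloor+1$ components.
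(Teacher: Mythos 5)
Your argument follows the paper's proof in its essentials: normalize via the Newton polygon, observe that the residual ambiguity after fixing the side polynomials is a diagonal transformation composed with a coordinate permutation so that only $\zeta\leftrightarrow\bar\zeta$ can occur, and parametrize each stratum by the interior coefficients of the triangle. On well-definedness and irreducibility you are in fact more explicit than the paper (which disposes of connectivity with ``follows easily''); the surjection $\PGL(3;\bc)\times U_\zeta\to\Sigma^{(d)}_\zeta$ from an irreducible source and the count of $\binom{d-1}{2}$ interior lattice points are both correct.

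The one genuine gap is the existence step, i.e.\ non-emptiness of $U_\zeta$, which you defer to the representatives of Section~\ref{sec:pi1}. That deferral does not work as stated. Those representatives are built in $\hat\Sigma^{(d)}$ (smooth degree-$d$ curve plus three tangent lines), not in $\Sigma^{(d)}$; to import them here you would have to pass through the Cremona transformation and track the invariant $\zeta$, and the paper only sets up the correspondence between $\Sigma^{(d)}$, $\tilde\Sigma^{(d)}$ and $\hat\Sigma^{(d)}$ \emph{after} this proposition, so the forward reference is circular in the paper's logical order. Moreover the Fermat-type constructions do not directly reach every stratum: the component with $\zeta=1$ for $d$ even is only reached via a degeneration argument, and nothing is produced for $d=2$. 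The paper instead closes existence on the spot: fixing the three side polynomials already forces multiplicity $d$ and a one-directional tangent cone at each vertex, and for $d>2$ a generic choice of the interior coefficients makes the degree-$(d+1)$ jet at each singular point coprime to the tangent cone, which pins down the local type $u^d+v^{d+1}$ and excludes extra singularities; you should supply this (or an equivalent) genericity argument. Be aware also that the case $d=2$ is genuinely exceptional (the interior triangle is empty): the paper exhibits an explicit tricuspidal quartic for $\zeta=-1$ but finds that $\zeta=1$ yields a non-reduced equation, so any complete proof must treat $d=2$ separately.
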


\begin{proof}
As mentioned at the beginning of this section, we will prove the statement for the quotient space 
$\PGL(3;\bc)\backslash\Sigma^{(d)}$, that is, up to projective automorphisms.

The above discussion implies that if $\cC\in\Sigma^{(d)}$, then there exists a projective transformation $\Phi_1$ sending $\cC$
to a curve $\cC_1=\{F_\zeta(x,y,z)=0\}$ (in the same connected component of~$\Sigma^{(d)}$) such that the polynomials associated 
with the edges of its Newton polygon are $(x+z)^d, (y+z)^d, (y+\zeta x)^d$, $\zeta^d=1$. Moreover, after applying the symmetry with 
respect to $z=0$ we can assume that $\im(\zeta)\geq 0$.

Assume the existence of another $\Phi_2\in\PGL(3;\bc)$ such that $\Phi_2(\cC):=\cC_2=\{F_\omega(x,y,z)=0\}$, whose associated 
polynomials to the edges of its Newton polygon are $(x+z)^d, (y+z)^d, (y+\omega x)^d$, $\omega^d=1$, $\im(\omega)\geq 0$. 
We want to prove that $\omega=\zeta$. This implies the existence of a projective transformation sending 
$\{x+z=0, y+z=0, y+\zeta x=0\}$ to $\{x+z=0, y+z=0, y+\omega x=0\}$ and globally preserving the lines~$xyz=0$.
This transformation is a composition of a diagonal transformation and a permutation, and the result follows easily.

To end the proof one needs to show that for each $\zeta$ a curve $\cC\in \Sigma^{(d)}$ exists. Let us consider a polynomial 
$F$ whose Newton polygon is as in Figure~\ref{fig:np}, with the fixed polynomials in the edges of the triangle. This fact does 
not guarantee the topological type of the points $P_x,P_y,P_z$. Let us check for $P_z$. In order to have this topological type 
it is enough that the homogeneous part of degree~$d$ is coprime with the homogeneous part of degree~$d+1$. This is achieved 
for a generic choice of the coefficients of the inner triangle in Figure~\ref{fig:np} for $d>2$; the coefficients inside this 
inner triangle have no effect in the local type. For a generic choice no other singular points will arise 
(a confirmation of this fact will come later in this section). The connectivity for each $\zeta$ follows easily.

The case $d=2$ is special since the inner triangle mentioned above is empty. In that case, for $\zeta=-1$, one can check
that $x^2 y^2+ y^2 z^2 + x^2 z^2 +2 x y z(x+y-z)$ gives an appropriate equation; however, for $\zeta=1$, a non-reduced equation 
is obtained.
\end{proof}

\subsection{The equisingular family \texorpdfstring{$\tilde\Sigma^{(d)}$}{tildeSigmad} of triangular curves associated with \texorpdfstring{$\Sigma^{(d)}$}{Sigmad}}
\mbox{}

Let $\tilde{\Sigma}^{(d)}$ be the realization space of the reducible curves formed by an element of $\Sigma^{(d)}$
and the three lines joining the singular points, which we refer to as \emph{triangular}. 
This space coincides with the realization space of curves of degree~$2d+3$ 
with three singular points having the topological type of $u v((u+v)^d+v^{d+1})$. The argument starts by calculating the
multiplicity of intersection of a line joining two such singular points with a curve $\tilde \cC$ in $\tilde{\Sigma}^{(d)}$. 
Since each singular point has multiplicity $d+2$, B\'ezout's Theorem implies that the lines are irreducible components of  
$\tilde \cC$ and hence $\tilde \cC=L_1\cup L_2\cup L_3\cup \cC$ with $\cC\in \Sigma^{(d)}$. Hence combining the previous
discussion and Proposition~\ref{prop:sigma} one obtains the following.

\begin{cor}
The realization space~$\tilde{\Sigma}^{(d)}$ decomposes in $\left(\left\lfloor\frac{d}{2}\right\rfloor+1\right)$ irreducible 
(and connected) components $\tilde{\Sigma}^{(d)}_\zeta$, parametrized by $\zeta\in \TT_d^+$.
\end{cor}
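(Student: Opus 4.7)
The plan is to reduce the statement directly to Proposition~\ref{prop:sigma} via the observation, already sketched in the paragraph preceding the corollary, that any $\tilde\cC\in\tilde\Sigma^{(d)}$ must split off the three lines through the singular points. The main task is therefore to make this splitting precise and to argue that it induces an isomorphism of realization spaces compatible with the decomposition into connected components.

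First, I would verify the local topology. The germ $uv((u+v)^d+v^{d+1})$ has multiplicity $d+2$, given by two smooth transverse branches $u=0$ and $v=0$ together with a cuspidal branch of multiplicity $d$ whose tangent direction $v=0$ matches one of the lines; in particular, each singular point $P_x,P_y,P_z$ of $\tilde\cC$ has multiplicity $d+2$, while the line $L$ joining any two of them is tangent to \emph{one} branch at each endpoint but is not a component of either cusp. Now apply B\'ezout: $L\cdot\tilde\cC=2d+3$, but the local intersection numbers satisfy $(L\cdot\tilde\cC)_{P}\geq \mathrm{mult}_{P}(L)\cdot\mathrm{mult}_{P}(\tilde\cC)=d+2$ at each of the two endpoints. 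Since $2(d+2)>2d+3$, the line $L$ must be a component of $\tilde\cC$.

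Applying this to each of the three sides gives $\tilde\cC=L_x\cup L_y\cup L_z\cup\cC$ where $\cC$ has degree $2d$. A local computation at each $P_i$ (dividing the local equation $uv((u+v)^d+v^{d+1})$ by the two smooth factors coming from the two lines meeting there) shows that the residual curve $\cC$ has a unique singular point at $P_i$ of local type $u^d+v^{d+1}$, and hence $\cC\in\Sigma^{(d)}$. Conversely, for any $\cC\in\Sigma^{(d)}$, Property~\eqref{prop:2} in the preceding discussion guarantees that the three lines joining its singular points are distinct and non-concurrent, and the local types glue back to $uv((u+v)^d+v^{d+1})$; so adding these lines produces an element of $\tilde\Sigma^{(d)}$.

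These two constructions are clearly inverse to each other and algebraic, so they provide a $\PGL(3;\CC)$-equivariant isomorphism of quasi-projective varieties $\tilde\Sigma^{(d)}\cong\Sigma^{(d)}$. The decomposition and its parametrization by $\TT_d^+$ then follow at once from Proposition~\ref{prop:sigma}, and the corresponding irreducible/connected components are denoted $\tilde\Sigma^{(d)}_\zeta$. The only non-routine step is the local topological verification that removing the two lines from the germ $uv((u+v)^d+v^{d+1})$ really leaves the singularity $u^d+v^{d+1}$; everything else is a direct application of B\'ezout and the previous proposition.
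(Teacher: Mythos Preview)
Your argument is correct and follows exactly the approach sketched in the paper: B\'ezout forces the three lines to split off, the residual curve lies in $\Sigma^{(d)}$, and Proposition~\ref{prop:sigma} gives the decomposition. One minor inaccuracy worth fixing: the cuspidal branch $(u+v)^d+v^{d+1}=0$ has tangent cone $u+v=0$, not $v=0$, so it is transverse to both smooth branches $u=0$ and $v=0$ rather than tangent to one of them---but this does not affect the multiplicity count (still $d+2$) or the B\'ezout inequality $2(d+2)>2d+3$.
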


\subsection{An equisingular family \texorpdfstring{$\hat{\Sigma}^{(d)}$}{hatSigmad} of triangular curves birationally equivalent to \texorpdfstring{$\tilde{\Sigma}^{(d)}$}{tildeSigmad}}
\mbox{}

Applying the canonical Cremona transformation we obtain a family $\hat{\Sigma}^{(d)}$, the realization space of curves formed 
by a smooth curve of degree~$d$ and three non-concurrent lines which are tangent at $d$-inflection points, which will also be 
referred to as \emph{triangular}. 

\begin{cor}
The realization space~$\hat{\Sigma}^{(d)}$ decomposes in $\left(\left\lfloor\frac{d}{2}\right\rfloor+1\right)$ irreducible 
(and connected) 
components $\hat{\Sigma}_\zeta^{(d)}$ parametrized by $\zeta\in \TT_d^+$.
\end{cor}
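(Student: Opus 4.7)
The plan is to use the standard Cremona transformation $\Psi : [x:y:z] \mapsto [yz : xz : xy]$ to exhibit an explicit bijection $\hat{\Sigma}^{(d)} \leftrightarrow \tilde{\Sigma}^{(d)}$ that respects the decomposition into irreducible (and connected) components. Once this is in hand, the result follows immediately from the preceding corollary, with $\hat{\Sigma}^{(d)}_\zeta := \Psi(\tilde{\Sigma}^{(d)}_\zeta)$ for each $\zeta \in \TT_d^+$.

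Working modulo the action of $\PGL(3;\bc)$, for any $\hat{\cC} \in \hat{\Sigma}^{(d)}$ one may first assume that the three non-concurrent lines are the coordinate axes $L_x = \{x=0\}$, $L_y = \{y=0\}$, $L_z = \{z=0\}$, so that $\hat{\cC} = L_x \cup L_y \cup L_z \cup D$, where $D$ is smooth of degree $d$ and tangent to each $L_i$ at a $d$-inflection point $Q_i$, necessarily distinct from the three vertices $P_x, P_y, P_z$ (otherwise $D$ would fail to be smooth there). The transformation $\Psi$ is an involution with fundamental points $P_x, P_y, P_z$; it contracts each $L_i$ to the opposite vertex and, conversely, each vertex is blown up to the opposite line. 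Since $D$ has degree $d$ and avoids the fundamental points, the classical Cremona degree formula gives $\deg \Psi(D) = 2d - 0 - 0 - 0 = 2d$, so $\Psi(\hat{\cC}) = L_x \cup L_y \cup L_z \cup \Psi(D)$ has the combinatorial shape of an element of $\tilde{\Sigma}^{(d)}$.

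The heart of the argument is to verify that the contact of order $d$ between $D$ and $L_z$ at the non-fundamental point $Q_z$ produces on $\Psi(D)$, at the fundamental point $P_z$, a singularity of local topological type $u^d + v^{d+1} = 0$. This reduces to a local computation: in affine coordinates centered at $Q_z$ with $L_z$ as one coordinate axis, the smoothness of $D$ together with the precise order $d$ contact with $L_z$ fixes the leading local behaviour, and the substitution encoded by $\Psi$ transplants this data to $P_z$, where the Newton polygon of the transformed equation takes precisely the shape of Figure~\ref{fig:np} with generic coefficients in the interior. Performing the analogous check at $P_x$ and $P_y$ shows $\Psi(D) \in \Sigma^{(d)}$, hence $\Psi(\hat{\cC}) \in \tilde{\Sigma}^{(d)}$. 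Since $\Psi$ is an involution on its open locus of definition and is given by explicit algebraic formulas on the coefficients of the defining equations, the induced map is an isomorphism of quasi-projective varieties and in particular preserves irreducible and connected components.

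The main obstacle is precisely this local identification at each fundamental point: one must show that the tangency condition defining $\hat{\Sigma}^{(d)}$ matches the singularity type defining $\Sigma^{(d)}$ \emph{exactly}, neither a weaker nor a stronger condition. Once this equivalence is established, together with the observation that the parameter $\zeta \in \TT_d^+$ is transported intact through $\Psi$ (since $\Psi$ is defined over $\QQ$ and fixes the coordinate flag), the statement is a direct translation of the previous corollary across the Cremona correspondence.
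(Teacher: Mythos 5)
Your proposal is correct and follows essentially the same route as the paper, which obtains $\hat{\Sigma}^{(d)}$ from $\tilde{\Sigma}^{(d)}$ via the standard Cremona transformation and transports the component count from the preceding corollary; you merely make explicit the local check (contact of order $d$ at a non-fundamental point of a contracted line becoming a singularity of type $u^d+v^{d+1}$ at the opposite vertex) that the paper leaves implicit. One small remark: the tangency points $Q_i$ avoid the vertices because of B\'ezout applied to the \emph{other} line through the vertex (an extra intersection point would force $D\cdot L_j>d$), not directly because of smoothness of $D$, but this does not affect the argument.
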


This fact was already studied in~\cite{CC:08,Sh:18}.

\section{Zariski tuples of conjugate curves}\label{sec:Zariski-tuples}

\subsection{A topological invariant of the embedding: the linking number}\label{sec:link}
\mbox{}

We review the results in~\cite{Sh:18} for completeness. Let $\hat \cC=\cC\cup L_1\cup L_2\cup L_3\in\hat{\Sigma}^{(d)}_\zeta$ 
for some $\zeta\in\TT_d^+$ where $\cC$ is the component of degree~$d$ of~$\hat \cC$. For the sake of simplicity the triangle 
$L_1\cup L_2\cup L_3$ is denoted by $\cL$. As mentioned above, after a suitable 
projective automorphism we can assume that $L_1=\{x=0\}$, $L_2=\{y=0\}$, $L_3=\{z=0\}$, and $\cC=\{F(x,y,z)=0\}$, where
\[
F(x,0,z)=(x+z)^d,\quad F(0,y,z)=(y+z)^d,\quad F(x,y,0)=(x+\zeta y)^d.
\]
Let us consider a model $S_\cC$ of the $d$-cyclic covering of $\bp^2$ ramified along~$\cC$, 
\[
S_\cC:=\{[x:y:z:t]\in\bp^3\mid t^d=F(x,y,z)\}.
\]
Any deck transformation of the covering should be identified with elements of $H_1(\bp^2\setminus\cC;\bz)=\ZZ/d\ZZ$,
or analogously, with $\Hom(H_1(\bp^2\setminus\cC;\bz);\CC^*)$. Consider a character $\xi:H_1(\bp^2\setminus\cC;\bz)\to\bc^*$ 
such that if $\mu$ is a meridian of~$\cC$, then $\xi(\mu)=\exp\left(\frac{2i\pi}{d}\right)=:\zeta_d$.
This character is associated with the monodromy automorphism $\sigma:S_\cC\to S_\cC$,
$\sigma([x:y:z:t]):=[x:y:z:\zeta_d t])$ as follows. If $\gamma$ is a cycle in $\bp^2\setminus\cC$ based at $[x_0:y_0:z_0]$ and 
$\tilde{\gamma}$ is the lift of $\gamma$ starting at $[x_0:y_0:z_0:t_0]\in S_\cC$, then the end point is 
$[x_0:y_0:z_0:\xi(\gamma) t_0]$.

Let us construct a cycle~$\gamma$ representing the homology class of a loop based at $[0:1:0]$ 
as a composition of paths $\gamma_1*\gamma_2*\gamma_3$ with $\gamma_i\subset L_i\setminus\cC$.
Note that $\gamma$ is supported on $\cL\setminus\cC$ and it is based at $[0:1:0]$ with the 
following trajectory:
\[
[0:1:0]\xrightarrow{\ \gamma_1\ }[0:0:1]\xrightarrow{\ \gamma_2\ }
[1:0:0]\xrightarrow{\ \gamma_3\ }[0:1:0].
\]
The fact that the contact order is~$d$ ensures that the choice of $\gamma$ does not affect the value of the character as 
will be checked below.

The lift $\tilde{\gamma}=\tilde\gamma_1*\tilde\gamma_2*\tilde\gamma_3$ of $\gamma$ starting at $[0:1:0:1]$ can be 
constructed as follows. First the path $\gamma_1$ is lifted, since $\cC\cap L_1$ has equations $x=t^d-(y+z)^d=0$, 
it has $d$ irreducible components, and since $\im(\gamma)\cap \cC=\emptyset$, the path $\gamma_1$ lifts to $\tilde\gamma_1$
in $S_1=\{x=t-(y+z)=0\}\subset S_\cC$. Note that the lift $\tilde{\gamma}_1$ ends at~$[0:0:1:1]$, which is the only 
point in $S_1$ and the fiber of $\gamma_1(1)=[0:0:1]$. Analogously, since $\cC\cap L_2$ is given by $\{y=t^d-(x+z)^d=0\}$, 
the lift $\tilde{\gamma}_2$ is supported on $\{y=t-(x+z)=0\}\subset S_\cC$ and it ends at~$[1:0:0:1]$. 
Finally, the equations of $\cC\cap L_3$ are $\{z=t^d-(x+\zeta y)^d=0\}$, and thus $\tilde{\gamma}_3$ is contained in 
$\{z=t-(x+\zeta y)=0\}\subset S_\cC$ whose end is~$[0:1:0:\zeta]$, i.e., $\xi(\gamma)=\zeta$, which agrees with the action 
of the monodromy~$\sigma$ on~$\gamma$ as discussed above.

Following~\cite{afg:17}, we will sketch how to obtain a topological invariant from these data.
Let $\hat\xi:H_1(\bp^2\setminus\hat \cC;\bz)\to\bc^*$ be the restriction of a character $\xi$ on $\bp^2\setminus\cC$ obtained 
as $\hat{\xi}:=\xi\circ i_*$, where $i:\bp^2\setminus\hat \cC\to\bp^2\setminus\cC$ is the inclusion.
Consider $\hat\gamma$ a cycle homologous to $\gamma$ in $H_1(\bp^2\setminus\cC;\bz)$ whose support is outside 
$\cL=L_1\cup L_2\cup L_3$. Then $\hat\xi(\gamma):=\hat\xi(\hat \gamma)=\zeta$ (since the character $\hat\xi$ is trivial 
on the meridians of the lines) and hence $\hat\xi(\gamma)$ is well defined since it does not depend on the representative 
$\hat \gamma$. Then, the value of $\hat\xi(\gamma)$ is an oriented topological invariant of $(\bp^2,\hat \cC^{\text{ord}},\hat\xi)$,
where the irreducible components in~$\hat \cC$ are ordered. Analogously, the set 
$\{\hat\xi(\gamma)^{\pm 1}\}$ is a topological invariant of 
$(\bp^2,\hat \cC,\hat\xi)$ when the conditions on orientation and ordering are removed.

\subsection{Main results}
\mbox{}

For completeness, we include the following result, already proven by T.~Shirane, to be refine 
in Theorem~\ref{thm:complement}.

\begin{thm}[\cite{Sh:18}]\label{thm:sh}
If $\hat\cC_{\zeta_i}\in\hat{\Sigma}^{(d)}_{\zeta_i}$, $\zeta_i\in \TT_d^+$, then the pairs 
$(\bp^2,\hat\cC_{\zeta_1})$ and $(\bp^2,\hat\cC_{\zeta_2})$ are homeomorphic if and only if $\zeta_1=\zeta_2$.
\end{thm}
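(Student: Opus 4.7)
The plan is to extract the parameter $\zeta$ directly from the topology of the pair $(\bp^2,\hat\cC_\zeta)$ by means of the linking-type invariant set up in \S~\ref{sec:link}. The ``if'' direction is essentially formal: since $\hat\Sigma^{(d)}_\zeta$ is irreducible and connected, any two of its members are joined by an equisingular family in $\PP^{N_d}$, which can be promoted to an ambient isotopy of pairs by standard equisingular-deformation arguments, so that the pairs $(\bp^2,\hat\cC_\zeta)$ and $(\bp^2,\hat\cC'_\zeta)$ are homeomorphic whenever both lie in $\hat\Sigma^{(d)}_\zeta$.

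For the converse, the explicit lifting computation carried out just above shows that $\hat\xi(\gamma)=\zeta$, where $\gamma=\gamma_1*\gamma_2*\gamma_3$ is the triangular cycle supported on $\cL\setminus\cC$ and $\hat\xi$ is the pullback of the character sending a meridian of $\cC$ to $\zeta_d=\exp(2\pi i/d)$. Any homeomorphism $\Phi:(\bp^2,\hat\cC_{\zeta_1})\to(\bp^2,\hat\cC_{\zeta_2})$ must send the unique degree-$d$ component of $\hat\cC_{\zeta_1}$ to that of $\hat\cC_{\zeta_2}$, permute the three lines, and induce a compatible isomorphism on the first homology of the complements. Because the three lines may be permuted by $\Phi$ (a cyclic permutation preserving the orientation of the triangle, a transposition reversing it) and because the orientation of $\bp^2$ may be reversed, the class of $\gamma$ is well-defined only up to inversion; hence the unordered set $\{\hat\xi(\gamma)^{\pm 1}\}=\{\zeta,\bar\zeta\}$ is a genuine topological invariant of $(\bp^2,\hat\cC_\zeta)$.

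Equating these invariants for the two curves yields $\{\zeta_1,\bar\zeta_1\}=\{\zeta_2,\bar\zeta_2\}$, and since $\TT_d^+$ contains by construction exactly one representative from each complex-conjugate pair of $d$-th roots of unity, this forces $\zeta_1=\zeta_2$, as required.

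The delicate point---and where I expect the main technical work to lie---is to verify that $\hat\xi(\gamma)$ is genuinely independent of the choices made in constructing $\gamma$, most notably of the concrete sub-path $\gamma_i\subset L_i\setminus\cC$. This is precisely where the hypothesis that each $L_i$ meets $\cC$ at a single point of contact order $d$ is essential: as $\cC\cap L_i$ is a single point, altering $\gamma_i$ only changes its class in $H_1(\bp^2\setminus\cC;\bz)$ by a multiple of $d$ times a meridian of $\cC$, on which $\hat\xi$ takes the value $\zeta_d^d=1$. Once this independence is in place, the invariance under homeomorphisms of pairs follows from the functoriality of $H_1$ with characters, and the distinction of different strata reduces to the simple combinatorial statement about $\TT_d^+$ above.
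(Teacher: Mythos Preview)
Your proposal is correct and follows essentially the same line as the paper's own proof: the linking invariant $\{\hat\xi(\gamma)^{\pm1}\}=\{\zeta,\bar\zeta\}$ distinguishes the strata, and the ``if'' direction is handled by the connectedness of each $\hat\Sigma^{(d)}_\zeta$ via equisingular deformation. Your closing remark on well-definedness is in fact stronger than needed here, since $L_i\setminus\cC\cong\CC$ is simply connected and so $\gamma_i$ is already unique up to homotopy rel endpoints; the argument via multiples of $d$ times a meridian becomes relevant only in the more general Artal--Shirane setting of \S\ref{sec:consecuencias}.
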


\begin{proof}
Consider $\xi$ the character defined by $\xi(\mu)=\zeta_d$ for a meridian $\mu$ of $\cC$ and $\gamma$ the cycle described above.
Note that the pair $(\xi(\mu),\{\hat\xi(\gamma)^{\pm 1}\})$ becomes $(\zeta_d,\{\zeta^{\pm 1}_i\})$ when applied to the curve $\hat\cC_{\zeta_i}$. 
Since this is an invariant of $(\bp^2,\hat \cC_{\zeta_i},\hat\xi)$ and $\{\zeta_i,\bar\zeta_i\}\neq \{\zeta_j,\bar\zeta_j\}$, 
$\zeta_i,\zeta_j\in \TT_d^+$ if and only if $i=j$, this proves the \emph{only if} part.

The \emph{if} part follows from the fact that two curves in the same equisingular connected component have homeomorphic 
embeddings. The homeomorphism is obtained via the locally trivial fibration along a compact differentiable path joining both 
curves in the equisingular stratum of~$\hat\Sigma^{(d)}$.
\end{proof}

\begin{thm}\label{thm:complement}
Given $\hat\cC_{\zeta_i}\in\hat{\Sigma}^{(d)}_{\zeta_i}$, $\zeta_i\in \TT_d^+$, then
$\bp^2\setminus\hat\cC_{\zeta_1}$ and $\bp^2\setminus\hat\cC_{\zeta_2}$ are homeomorphic 
if and only if $\zeta_1=\zeta_2$.
\end{thm}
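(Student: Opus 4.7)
The \emph{if} direction is immediate from the equisingular deformation argument already used in the proof of Theorem~\ref{thm:sh}. The work is in the \emph{only if} direction, where the plan is to carry out the program announced in the Introduction: to upgrade the linking invariant of \S\ref{sec:link} from an invariant of the ordered triple $(\bp^2,\hat\cC,\hat\xi)$ to an invariant of the complement~$\bp^2\setminus\hat\cC$.

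Let $\Phi:\bp^2\setminus\hat\cC_{\zeta_1}\to\bp^2\setminus\hat\cC_{\zeta_2}$ be a homeomorphism and consider the induced $\Phi_*$ on $H_1(\bp^2\setminus\hat\cC_{\zeta_i};\bz)\cong\bz^4/\langle d\mu_\cC+\mu_1+\mu_2+\mu_3\rangle\cong\bz^3$. Write $K^{(i)}:=\langle\mu_1,\mu_2,\mu_3\rangle$ for the index-$d$ subgroup generated by the line meridians; the two characters $\hat\xi_i$ and $\bar{\hat\xi}_i$ of \S\ref{sec:link} are precisely the characters $H_1\to\bc^*$ that are trivial on $K^{(i)}$ and send $\mu_\cC$ to $\zeta_d$ or $\bar\zeta_d$. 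The first key step of the proof is to show that $\Phi_*$ respects this meridian structure: $\Phi_*(K^{(1)})=K^{(2)}$, and the induced map on the quotient $H_1/K^{(i)}\cong\bz/d\bz$ is $\pm\operatorname{id}$; equivalently, $\Phi^*\hat\xi_2\in\{\hat\xi_1,\bar{\hat\xi}_1\}$.

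The second step is to show that $\Phi_*$ further sends the class $[\hat\gamma_1]\in H_1/K^{(1)}$ of the reference cycle of \S\ref{sec:link} to $\pm[\hat\gamma_2]\in H_1/K^{(2)}$. Since $[\hat\gamma_i]$ equals the ``linking number'' $k(\zeta_i)$ with $\zeta_i=\zeta_d^{k(\zeta_i)}$, combining the two steps yields $k(\zeta_1)\equiv\pm k(\zeta_2)\pmod d$, i.e.\ $\zeta_1\in\{\zeta_2,\bar\zeta_2\}$, and the normalization $\zeta_i\in\TT_d^+$ then forces $\zeta_1=\zeta_2$. For this second step, the plan is to characterize $\hat\gamma$ purely from the complement as a loop encircling, in a definite cyclic order, the three triple singular points of~$\hat\cC$, so that its homology class mod~$K$ is preserved up to sign by any homeomorphism of the complement.

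The principal obstacle is the two characterization claims above: the intrinsic detection, from the complement alone, of the subgroup~$K$ and of the homology class of~$\hat\gamma$ modulo~$K$. Since $\pi_1(\bp^2\setminus\hat\cC)$ is abelian and isomorphic on both sides, neither is detectable from $\pi_1$ and must instead be extracted from finer topological data. The natural source is the local topology at the three triple singular points of~$\hat\cC$, at each of which the branches—two smooth transverse lines together with a smooth branch of~$\cC$ tangent of order~$d$ to one of them—form a topologically rigid configuration that should distinguish line meridians from the curve meridian and locate the reference cycle~$\hat\gamma$ up to sign. Making this precise is, we expect, the content of the ``good conditions'' alluded to in the Introduction.
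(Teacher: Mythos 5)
Your reduction of the theorem to the two ``characterization claims'' is the right frame, and your diagnosis of why they constitute the whole difficulty is also right: since $\pi_1(\bp^2\setminus\hat\cC_\zeta)\cong\ZZ^3$, neither the meridian subgroup $K$ nor the class of $\hat\gamma$ modulo $K$ can be detected from the fundamental group, so some finer invariant of the open manifold must be brought in. But the proposal stops exactly at that point: the sentence ``Making this precise is, we expect, the content of the `good conditions''' is where the proof has to happen, and it is missing. Moreover, the mechanism you suggest --- extracting the rigidity from the local topology of the branches \emph{at} the three singular points --- cannot be used as stated, because those points and the branches through them do not lie in the complement; a homeomorphism $\Phi:\bp^2\setminus\hat\cC_{\zeta_1}\to\bp^2\setminus\hat\cC_{\zeta_2}$ is not assumed to extend to the curves, so a priori it carries no local information at the singularities. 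What \emph{is} intrinsic to the complement is its end, and that is the object the paper's proof works with.

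Concretely, the paper takes $M_\zeta=\partial U(\hat\cC_\zeta)$, the boundary of a compact regular neighborhood: a graph manifold plumbed according to the minimal resolution, with $\pi_1(M_\zeta)\cong\pi_1^\infty(\bp^2\setminus\hat\cC_\zeta)$ large and non-abelian even though $\pi_1$ of the complement is abelian. A homeomorphism of complements induces an isomorphism of these fundamental groups at infinity; by Waldhausen's theorems on sufficiently large $3$-manifolds this isomorphism is realized by a homeomorphism $M_{\zeta_1}\to M_{\zeta_2}$, which (using the Milnor-fibration structure over the singular points, i.e.\ the Waldhausen--Neumann plumbing calculus) must respect the graph structure. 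That is precisely what forces $\mu_{\zeta_1}\mapsto\mu_{\zeta_2}^{\pm1}$ and $\gamma_{\zeta_1}\mapsto\gamma_{\zeta_2}^{\pm1}$ --- your two claims --- after which the conclusion $\zeta_1=\zeta_2$ follows exactly as you say, via the normalization $\zeta_i\in\TT_d^+$. So your outline is compatible with the paper's argument, but to close the gap you must replace ``local topology at the singular points'' by the boundary graph manifold and its Waldhausen rigidity (or an equivalent device); as written, the two characterization claims remain unproved.
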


\begin{proof}
Let $U(\hat\cC_\zeta)$ be a compact regular neighborhood of $\hat\cC_\zeta$ and let $M_\zeta:=\partial U(\hat\cC_\zeta)$
denote its boundary manifold. This is a graph manifold associated with the plumbing graph of a minimal resolution 
of~$\hat\cC_\zeta$ (that is, the preimage of $\hat \cC_\zeta$ is normal crossing, but that can have self-intersections).

In principle, since $M_\zeta$ is associated with a normal bundle on the regular part of the normal crossing divisor
associated with the minimal resolution of $\hat \cC_\zeta$, the inclusion 
$M_\zeta\hookrightarrow \bp^2\setminus\hat\cC_{\zeta}$ induces only an isomorphism 
$\pi_1(M_\zeta)\to\pi_1^\infty(\bp^2\setminus\hat\cC_{\zeta})$. Nevertheless, a homeomorphism 
$\Psi:\bp^2\setminus\hat\cC_{\zeta_1}\to\bp^2\setminus\hat\cC_{\zeta_2}$ induces a homeomorphism of Milnor balls around 
the singular points of the curves and hence an isomorphism $\Psi^\infty:\pi_1(M_{\zeta_1})\to\pi_1(M_{\zeta_2})$.
From the properties of sufficiently large graph manifolds (see~\cite{wald:68}), this implies the existence of a homeomorphism
$\Psi_M:M_{\zeta_1}\to M_{\zeta_2}$ such that ${\Psi_M}_*=\Psi^\infty$.
Since the boundary manifolds $M_{\zeta_i}$ come with an extra structure due to the Milnor fibration around the singular points
this homeomorphism preserves the graph structure (see~\cite{wal:67b} and also the Appendix in~\cite{Neumann-Acalculus}). 
In particular, a meridian $\mu_{\zeta_1}$ of $\cC_{\zeta_1}$ must be sent to $\mu_{\zeta_2}^{\pm 1}$, $\pm 1$-power 
of a meridian of $\cC_{\zeta_2}$. Moreover the above cycle $\gamma_{\zeta_1}$ must be sent to a $\pm 1$-power of 
$\gamma_{\zeta_2}$. Hence $\zeta_1=\zeta_2$ if such a homeomorphism exists.
\end{proof}

\begin{cor}
If $\tilde\cC_{\zeta_i}\in\tilde{\Sigma}^{(d)}_{\zeta_i}$, $\zeta_i\in \TT_d^+$, for $i=1,2$, then 
$\bp^2\setminus\tilde\cC_{\zeta_1}$ and $\bp^2\setminus\tilde\cC_{\zeta_2}$ are homeomorphic if and only if~$\zeta_1=\zeta_2$.
\end{cor}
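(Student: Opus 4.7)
The plan is to reduce this corollary directly to Theorem~\ref{thm:complement} using the birational correspondence already used to define $\hat\Sigma^{(d)}$ in Section~\ref{sec:construction}. Fix $\tilde\cC_\zeta=\cC_\zeta\cup L_1\cup L_2\cup L_3\in\tilde\Sigma^{(d)}_\zeta$. After a projective change of coordinates we may assume that $L_1\cup L_2\cup L_3=\{xyz=0\}$ is the coordinate triangle, so the three singular points of $\cC_\zeta$ are the coordinate vertices, and the defining equation of $\cC_\zeta$ has the normal form from Section~\ref{sec:construction}.

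Now consider the standard Cremona involution $\sigma:[x:y:z]\mapsto[yz:xz:xy]$. Its indeterminacy locus is precisely the coordinate triangle, and it restricts to a biregular automorphism of the torus $U:=\bp^2\setminus\{xyz=0\}$ (explicitly, $[x:y:z]\mapsto[1/x:1/y:1/z]$ there). By the construction of $\hat\Sigma^{(d)}$ in Section~\ref{sec:construction}, $\sigma$ sends the stratum $\tilde\Sigma^{(d)}_\zeta$ to $\hat\Sigma^{(d)}_\zeta$; more precisely, it takes $\cC_\zeta\cap U$ onto $\hat C_\zeta\cap U$, where $\hat C_\zeta$ is the smooth degree-$d$ component of a representative $\hat\cC_\zeta\in\hat\Sigma^{(d)}_\zeta$, while the coordinate triangle is common to source and target. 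Since both complements $\bp^2\setminus\tilde\cC_\zeta$ and $\bp^2\setminus\hat\cC_\zeta$ are contained in $U$, the map $\sigma$ restricts to a biregular (hence topological) isomorphism
\[
\bp^2\setminus\tilde\cC_\zeta\ \xrightarrow{\ \sim\ }\ \bp^2\setminus\hat\cC_\zeta.
\]

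Applying this for $\zeta=\zeta_1$ and $\zeta=\zeta_2$, we see that $\bp^2\setminus\tilde\cC_{\zeta_1}$ and $\bp^2\setminus\tilde\cC_{\zeta_2}$ are homeomorphic if and only if $\bp^2\setminus\hat\cC_{\zeta_1}$ and $\bp^2\setminus\hat\cC_{\zeta_2}$ are, and Theorem~\ref{thm:complement} then gives the desired equivalence with $\zeta_1=\zeta_2$. The only point requiring genuine verification is that $\sigma$ preserves the indexing by $\zeta$, i.e.\ that the Cremona image of a curve in $\tilde\Sigma^{(d)}_\zeta$ lands in $\hat\Sigma^{(d)}_\zeta$ rather than in $\hat\Sigma^{(d)}_{\bar\zeta}$; this is straightforward to check by tracking how the three edge polynomials $(x+z)^d$, $(y+z)^d$, $(y+\zeta x)^d$ of the Newton polygon transform, but it is the one step one should not skip. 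No other obstacle arises.
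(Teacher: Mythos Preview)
Your argument is correct and follows essentially the same route as the paper: use the standard Cremona transformation to identify $\bp^2\setminus\tilde\cC_\zeta$ with $\bp^2\setminus\hat\cC_\zeta$ and then invoke Theorem~\ref{thm:complement}. The one extra caution you raise---that $\sigma$ might land in $\hat\Sigma^{(d)}_{\bar\zeta}$ rather than $\hat\Sigma^{(d)}_\zeta$---is in fact unnecessary here, since by the construction in \S\ref{sec:construction} the component $\hat\Sigma^{(d)}_\zeta$ is \emph{defined} as the Cremona image of $\tilde\Sigma^{(d)}_\zeta$, so the indexing is preserved tautologically.
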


\begin{proof}
Recall that $\tilde\cC_{\zeta}\in\tilde{\Sigma}^{(d)}_{\zeta}$ is obtained from $\hat\cC_{\zeta}\in\hat{\Sigma}^{(d)}_{\zeta}$ 
via a standard Cremona transformation. This birational morphism produces a homeomorphism on their complements. 
Theorem~\ref{thm:complement} applies and the result follows.
\end{proof}

\section{Fundamental groups of complements of curves in \texorpdfstring{$\Sigma^{(d)}$}{Sigma d}, 
\texorpdfstring{$\tilde\Sigma^{(d)}$}{tilde Sigma d}, and \texorpdfstring{$\hat\Sigma^{(d)}$}{hat Sigma d}}
\label{sec:pi1}

Our goal in this section is to compute the fundamental group for any curve in $\tilde\cC\in\tilde{\Sigma}_{\zeta}^{(d)}$ 
and for any~$d$. We will do this using two different techniques.

\subsection{Lower degree cases}
\mbox{}

Fundamental groups of complements of curves in $\Sigma^{(d)}$ and $\tilde\Sigma^{(d)}$ or $\hat\Sigma^{(d)}$ are 
only well understood in the simplest cases, $d=2,3$. Here is a brief account of what is known.

For the case $d=2$, as mentioned above, it is classically known (see~\cite{zr:29}) that the fundamental group of the 
complement of the tricuspidal quartic $\cC\in\Sigma_{-1}^{(2)}$ is the finite metabelian group of order 12
$$\BB_3(\mathbb S^2)=
\langle \sigma_1,\sigma_2: \sigma_1\sigma_2\sigma_1=\sigma_2\sigma_1\sigma_2, \sigma_1\sigma_2^2\sigma_1=1\rangle
\cong \ZZ_3 \rtimes\ZZ_4.$$
It is an easy exercise to check that the fundamental group of the complement of a curve 
$\tilde\cC\in\tilde{\Sigma}_{-1}^{(2)}$ (a conic with three tangent lines) is the triangle Artin group~$(2,4,4)$. 

For the case $d=3$, the fundamental group of the complement of a sextic curve with $3\EE_6$ singularities depended 
on the connected component in $\Sigma^{(3)}$ it belongs (see~\cite{ac:98})
$$\pi_1(\PP^2\setminus \cC)=
\begin{cases}
\ZZ/2\ZZ*\ZZ/3\ZZ & \textrm{ if } \cC\in \Sigma_{1}^{(3)}\\
\ZZ/2\ZZ\times\ZZ/3\ZZ & \textrm{ if } \cC\in \Sigma_{\zeta_3}^{(3)},
\end{cases}
$$
where $\TT_3^+=\{1,\zeta_3\}$.
As for $\tilde{\Sigma}_{\zeta_3}^{(3)}$ it is also shown in~\cite{ac:98} that $\pi_1(\PP^2\setminus \tilde\cC)=\ZZ^3$ 
for $\tilde\cC\in\tilde{\Sigma}_{\zeta_3}^{(3)}$.

\subsection{Fermat curves}
\label{sec:fermat}
\mbox{}

In this section we present a way to calculate fundamental groups via finite coverings ramified along simpler curves. 
In this case we will use Fermat curves.

Let us consider the Kummer map $\Phi_d:\bp^2\to\bp^2$ given by $\Phi_d([x:y:z])=[x^d:y^d:z^d]$. 
Note that if $L=\{x+y+z=0\}$ then $\Phi_d^*(L)$ is a Fermat curve. The preimage of $L_x=\{y+z=0\}$
are $d$ tangent lines to $d$-inflection points, denoted by $L_{x,\tau}=\{y-\tau z=0\}$,
where $\tau^d=-1$. In the same way we consider $L_y=\{x+z=0\}$, $L_{y,\tau}=\{z-\tau x=0\}$
and also $L_z=\{x+y=0\}$, $L_{z,\tau}=\{x-\tau y=0\}$.

\begin{prop}
\label{prop:construction}
Let $\bt:=(\tau_1,\tau_2,\tau_3)$ where $\tau_i$ is a $d$-root of $-1$ and consider the following curve constructions:
\begin{enumerate}[label=\rm(\arabic{enumi})]
 \item\label{prop:uno}
$\hat\cC_{d,\bt,1}:=\Phi_d^*(L)\cup L_{x,\tau_1}\cup L_{y,\tau_2}\cup L_{z,\tau_3}$ with $\tau:=\tau_1\tau_2\tau_3$, $\zeta:=\tau^2$,
 \item\label{prop:dos} 
$\hat\cC_{d,\bt,2}:=\Phi_d^*(L)\cup L_{x,\tau_1}\cup L_{x,\tau_2}\cup L_{z,\tau_3}$ with $\zeta:=\tau_1\tau_2^{-1}$
and $\tau_1\neq \tau_2$,
 \item\label{prop:tres}
$\hat\cC_{d,\bt,3}:=\Phi_d^*(L)\cup L_{x,\tau_1}\cup L_{x,\tau_2}\cup L_{x,\tau_3}$ with $\zeta:=1$ and $\tau_i\neq \tau_j$, 
for $i\neq j$.
\end{enumerate}
Then $\hat\cC_{d,\bt,i}\in \hat{\Sigma}^{(d)}_{\zeta}$, $i=1,2$, where $\zeta\in \TT_{d}^+$.

For $i=3$, $\hat\cC_{d,\bt,3}\in \overline{\hat{\Sigma}^{(d)}_{1}}$ the closure of the component $\hat{\Sigma}^{(d)}_{1}$. 

Moreover, one can find a continuous equisingular family of curves $\{\hat\cC_t\}_{t\in(0,\varepsilon]}$ in $\hat{\Sigma}^{(d)}_{1}$ 
such that $\{\hat\cC_t\}_{t\in(0,\varepsilon]}\to\hat\cC_{d,\bt,3}$
where the triangle degenerates onto an ordinary triple point.
\end{prop}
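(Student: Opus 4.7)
I plan to split the argument into two parts: identifying the connected component of $\hat\Sigma^{(d)}$ containing $\hat\cC_{d,\bt,i}$ for $i=1,2$, and constructing the degenerating family for $i=3$.

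For the first part, I would start with the basic geometry. The curve $\Phi_d^*(L)=\{x^d+y^d+z^d=0\}$ is the smooth Fermat curve of degree $d$, and substituting $y=\tau z$ with $\tau^d=-1$ yields $F|_{L_{x,\tau}}=x^d+(\tau^d+1)z^d=x^d$, so $L_{x,\tau}$ meets the curve only at $[0:\tau:1]$ with contact of order $d$; analogous verifications cover $L_{y,\tau}$ and $L_{z,\tau}$. Computing pairwise intersections in each case: for \ref{prop:uno} the vertices $[1:\tau_1\tau_2:\tau_2]$, $[\tau_1\tau_3:\tau_1:1]$, $[\tau_3:1:\tau_2\tau_3]$ are automatically distinct because $(\tau_1\tau_2\tau_3)^d=(-1)^3\neq 1$ precludes $\tau_1\tau_2\tau_3=1$; for \ref{prop:dos} the vertices $[1:0:0]$, $[\tau_1\tau_3:\tau_1:1]$, $[\tau_2\tau_3:\tau_2:1]$ are distinct precisely when $\tau_1\neq\tau_2$; and for \ref{prop:tres} all three lines pass through $[1:0:0]$, producing the ordinary triple point claimed.

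To identify the connected component I would apply the linking invariant of Section~\ref{sec:link}. The decisive feature is that $F$ restricts to a pure $d$-th power on each type of line, so the cyclic cover $S_\cC=\{t^d=F\}$ admits $d$ canonical trivial sheets over each line ($t=x$ on $L_{x,\tau}$, etc.). Lifting the triangular cycle $\gamma=\gamma_1*\gamma_2*\gamma_3$ by choosing at each vertex the sheet matching the running $t$-coordinate and reading the final value, a direct bookkeeping yields $\hat\xi(\gamma)=(\tau_1\tau_2\tau_3)^2$ in case \ref{prop:uno} and $\hat\xi(\gamma)=\tau_1\tau_2^{-1}$ in case \ref{prop:dos}. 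Replacing $\zeta$ by $\bar\zeta$ when necessary to land in $\TT_d^+$ and invoking Theorem~\ref{thm:sh}, these values identify the component $\hat\Sigma^{(d)}_\zeta$ as stated.

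For case \ref{prop:tres}, I would construct the limiting family by a generic perturbation of the Fermat equation, $F_t=x^d+y^d+z^d+t\cdot h(x,y,z)$, choosing $h$ so that $\{F_t=0\}$ stays smooth for small $t>0$ and carries three $d$-flexes $P_i(t)\to[0:\tau_i:1]$ with non-concurrent tangent lines $L_i(t)\to L_{x,\tau_i}$. The existence of such $h$ follows from a dimension count on the incidence variety of smooth degree-$d$ curves marked with a $d$-flex at a point. To locate the family in $\hat\Sigma^{(d)}_1$, note that the triangle cycle $\gamma_t$ collapses onto $[1:0:0]\notin\{F_t=0\}$ as $t\to 0$, so for small $t$ it lies in a ball disjoint from $\{F_t=0\}$ and is null-homotopic there; hence $\hat\xi_t(\gamma_t)=1$, and by discreteness of the invariant $\zeta(t)=1$ throughout $(0,\varepsilon]$. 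The main obstacle is the explicit construction of $h$: preserving three $d$-flexes under perturbation is a codimension-$3(d-2)$ condition on the curve, so one must verify that the moduli space of smooth degree-$d$ curves with three specified $d$-flexes intersects the non-concurrency locus transversely near the Fermat point. This should be handled either by an explicit symmetric perturbation exploiting the $\mu_d^3$-symmetry of the Fermat configuration, or by a more abstract deformation-theoretic argument on the Hilbert scheme of pointed curves with a prescribed inflection.
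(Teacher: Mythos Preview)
Your approach to parts~\ref{prop:uno} and~\ref{prop:dos} is correct but genuinely different from the paper's. The paper never computes a linking invariant here: it simply writes down an explicit linear change of coordinates carrying the Fermat equation to the normal form of \S\ref{sec:construction}. For instance, in case~\ref{prop:uno} it sets $x_1=\tau x+y+z$, $y_1=\tau_3^{-1}(\tau x+y+\tau z)$, $z_1=\tau_2(\tau x+z+\tau^{-1}y)$ and checks that $x_1^d+y_1^d+z_1^d$ restricted to the three coordinate lines gives $(y+z)^d$, $(z+x)^d$, $(\zeta x+y)^d$ with $\zeta=\tau^2$; case~\ref{prop:dos} is similar. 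Your route via the linking invariant is more conceptual and avoids hunting for the right coordinates, but it relies on the computation of \S\ref{sec:link} (not on Theorem~\ref{thm:sh} itself, which is a consequence) and on the sheet-tracking you only sketch. The paper's route is entirely elementary and self-contained within \S\ref{sec:construction}.

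For part~\ref{prop:tres} there is a real gap: you never construct the family, and you acknowledge this as ``the main obstacle''. A dimension count plus an appeal to deformation theory of flexes is far heavier than what is needed and is not carried out. The paper instead writes down the family in one line: keep the lines $x=\tau_1 y$ and $x=\tau_2 y$ fixed, tilt the third to $x=\tau_3 y+tz$, and correct the degree-$d$ factor to
\[
x^d+y^d+z^d-t z\,\frac{x^d+y^d}{x-\tau_3 y},
\]
which one checks directly is smooth with the tilted line tangent to order~$d$ for $0<t\le\varepsilon$. The identification with $\hat\Sigma^{(d)}_1$ is also done differently: rather than your (nice) null-homotopy argument for the shrinking triangle, the paper observes that in this family the three $d$-flexes remain collinear, and in the normal form of \S\ref{sec:construction} the flexes $[0{:}1{:}-1]$, $[1{:}0{:}-1]$, $[-\zeta{:}1{:}0]$ are collinear precisely when $\zeta=1$.
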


\begin{proof}
For the family of curves in~\ref{prop:uno} consider the following change of coordinates:
\[
x_1=\tau x+y+z,\quad
y_1=\tau_3^{-1}(\tau x+y+\tau z),\quad
z_1=\tau_2(\tau x+z+\tau^{-1} y).
\]
Let us consider the homogeneous polynomial 
$F(x,y,z)=x_1^d+y_1^d+z_1^d$. Then
\[
F(0,y,z)=(y+z)^d,\quad
F(x,0,z)=(z+x)^d,\quad
F(x,y,0)=(\zeta x+y)^d,
\]
and the statement follows.

The proof for the family of curves in construction~\ref{prop:dos} is analogous considering the following change of coordinates:
\[
x_1= x+ y+z,\quad
y_1=\tau_3(x+y),\quad
z_1=\tau_3(\tau_1 x+\tau_2 y)
\]
and  
$F(x,y,z)=x_1^d+y_1^d+z_1^d$. Then
\[
F(0,y,z)=(y+z)^d,\quad
F(x,0,z)=(x+z)^d,\quad
F(x,y,0)=(x+\zeta^{-1} y)^d.
\]

For construction~\ref{prop:tres} consider the following family
\[
\{(x-\tau_1 y)(x-\tau_2 y)(x-\tau_3 y-t z)\left(x^d+y^d+z^d-t z\frac{x^d+y^d}{x-\tau_3 y}\right)=0\}_{t\in(0,\varepsilon]}.
\]
One can check that for $0<t\leq\varepsilon$ these curves are union of a smooth curve of degree~$d$ and three tangent lines
at aligned $d$-flexes. The fact that these flexes are aligned only occurs if the curve is in $\hat{\Sigma}^{(d)}_{1}$.
\end{proof}

\begin{obs}
Note that construction~\ref{prop:uno} in Proposition~\ref{prop:construction} produces representatives in 
$\hat{\Sigma}^{(d)}_{\zeta}$ for all $\zeta\in\TT_d^+$ in the odd case, whereas construction~\ref{prop:dos}
produces representatives in $\hat{\Sigma}^{(d)}_{\zeta}$ as long as $\zeta\neq 1$. The only representative not
produced directly this way is one in the component $\hat{\Sigma}^{(d)}_{1}$ for $d$ even.
\end{obs}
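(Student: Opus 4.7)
My plan is to reduce the remark to a congruence count modulo $d$. The first move is to parametrize any $d$-th root of $-1$ uniformly as $\tau_j = \exp(i\pi(2k_j+1)/d)$ with $k_j\in\ZZ/d\ZZ$; then the invariant $\zeta$ attached to each construction in Proposition~\ref{prop:construction} becomes an explicit function of the~$k_j$, and the question of which components $\hat\Sigma^{(d)}_\zeta$ are hit becomes a question about which residue classes modulo $d$ appear in a simple affine expression.

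For construction~\ref{prop:uno}, plugging in this parametrization gives $\zeta=\tau^2=\zeta_d^{\,m}$ with $m\equiv 2(k_1+k_2+k_3)+3\pmod d$. When $d$ is odd, $\gcd(2,d)=1$, so the map $k\mapsto 2k+3$ is a bijection on $\ZZ/d\ZZ$ and every residue $m$, hence every $\zeta\in\TT_d^+$, is realized. When $d$ is even, by contrast, $2k+3$ is always odd whereas $d$ is even, so only odd residues modulo $d$ appear; in particular $m\equiv 0$, i.e.\ $\zeta=1$, is excluded. This parity observation is the main (and essentially only non-formal) point of the whole verification.

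For construction~\ref{prop:dos} the same parametrization yields $\zeta=\exp(2i\pi(k_1-k_2)/d)$, which attains every $d$-th root of unity as $(k_1,k_2)$ varies; the hypothesis $\tau_1\neq\tau_2$ translates into $k_1\not\equiv k_2\pmod d$ and excludes only $\zeta=1$, independently of the parity of~$d$.

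Putting the two computations together, every $\zeta\in\TT_d^+\setminus\{1\}$ is produced by~\ref{prop:dos}, and $\zeta=1$ is produced by~\ref{prop:uno} precisely when $d$ is odd. The only component left uncovered is therefore $\hat\Sigma^{(d)}_1$ with $d$ even, which is exactly the content of the remark. The hard part, to the extent there is one, is the parity obstruction for construction~\ref{prop:uno} in the even case; everything else is bookkeeping.
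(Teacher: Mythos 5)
Your verification is correct: the paper states this as a remark without proof, and your reduction to the congruence $m\equiv 2(k_1+k_2+k_3)+3\pmod d$ for construction (1) (so that only odd residues occur when $d$ is even, while $2$ is invertible when $d$ is odd) together with $\zeta=\zeta_d^{k_1-k_2}$, $k_1\not\equiv k_2$, for construction (2) is exactly the intended justification. Nothing is missing.
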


\subsection{Fundamental group computation}
\mbox{}

We start with the fundamental group of a simple line arrangement, whose computation
can be done using its real affine picture shown in Figure~\ref{fig:lines1}
where $\ell_x=\{y=0\}$, $\tilde\ell_x=\{y+z=0\}$, $\ell_y=\{x=0\}$, $\tilde\ell_y=\{x+z=0\}$, $\ell=\{x+y+z=0\}$,
and $\ell_\infty=\{z=0\}$. 

\begin{figure}[ht]
\begin{center}

\begin{tikzpicture}[vertice/.style={draw,circle,fill,minimum size=0.2cm,inner sep=0}]
\tikzset{%
  suma/.style args={#1 and #2}{to path={%
 ($(\tikztostart)!-#1!(\tikztotarget)$)--($(\tikztotarget)!-#2!(\tikztostart)$)%
  \tikztonodes}}
} 
\coordinate (XY) at (0,0);
\coordinate (X) at (-1,0);
\coordinate (Y) at (0,-1);
\coordinate (Z) at (-1,-1);
\coordinate (X1) at (-1,1);
\draw[suma=.5 and .5] (X) to (XY) node[right] {$\ell_x$};
\draw[suma=.5 and .5] (Y) to (XY) node[above] {$\ell_y$} ;
\draw[suma=.5 and .5,dashed] ($.5*(XY)+.5*(X)$) to ($.5*(Y)+.5*(Z)$);
\draw[suma=.5 and .5] (X) to (Y) node[right] {$\ell$};
\draw[suma=.5 and .5] (Z) to (X)  node[above] {$\tilde\ell_{y}$};
\draw[suma=.5 and 1.5] (Z) to (Y) node[above] {$\tilde\ell_{x}$};
\end{tikzpicture}
\caption{$6$-line arrangement}
\label{fig:lines1}
\end{center}
\end{figure}

\begin{prop}
The fundamental group $G$ of the complement of the line arrangement 
$\ell_x\cup \ell_y\cup \tilde\ell_x\cup \tilde\ell_y\cup \ell\cup \ell_\infty\subset \bp^2$ has generators 
$\gamma_x,\gamma_y,\gamma_\ell,\tilde\gamma_{x},\tilde\gamma_{y}$ and relations
\begin{multicols}{3}
\begin{enumerate}[label=\rm(G\arabic*)]
\item\label{rel1-xy} $[\tilde\gamma_{x},\tilde\gamma_{y}]=1$,
\item\label{rel1-tilde-x1} $[\gamma_y\cdot\gamma_\ell,\tilde\gamma_{x}]=1$,
\item\label{rel1-tilde-x2} $[\tilde\gamma_{x}\cdot \gamma_y,\gamma_\ell]=1$,
\end{enumerate}
\end{multicols}
\vspace{-5mm}
\begin{multicols}{3}
\begin{enumerate}[label=\rm(G\arabic*)]
\setcounter{enumi}{3}
\item\label{rel1-tilde-l1} $[\gamma_y,\gamma_x]=1$,
\item\label{rel1-tilde-y1} $[\gamma_x\cdot\tilde\gamma_{y},\gamma_\ell]=1$,
\item\label{rel1-tilde-y2} $[\gamma_\ell\cdot \gamma_x,\tilde\gamma_{y}]=1$.
\end{enumerate}
\end{multicols}
Moreover, the orbifold fundamental group~$\tilde G$ of the complement of $\ell\cup \tilde\ell_x\cup \tilde\ell_y\subset \PP^2$ 
with orbifold locus of order~$d$ along $\ell_x\cup \ell_y\cup \ell_\infty$, is obtained as a quotient of $G$ by the 
normal subgroup generated by the relations
\begin{multicols}{3}
\begin{enumerate}[label=\rm(G\arabic*)]
\setcounter{enumi}{6}
\item\label{rel1-x} $\gamma_x^d=1$,
\item\label{rel1-y} $\gamma_y^d=1$,
\item\label{rel1-z} 
$\gamma_\infty^d=1$,
\end{enumerate}
\end{multicols}
where $\gamma_\infty=(\tilde\gamma_{x}\cdot\gamma_\ell\cdot \gamma_x\cdot \tilde\gamma_{y}\cdot \gamma_y)^{-1}$.
\end{prop}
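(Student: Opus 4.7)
\emph{Proof sketch.} The plan is to apply the Zariski--van Kampen theorem to a generic pencil of lines in $\PP^2$, read off the braid monodromy relations at each singular point, and then obtain the orbifold fundamental group as the standard quotient by powers of meridians. I would begin by listing the singularities of the arrangement: affinely there are two nodes at $(0,0)=\ell_x\cap\ell_y$ and $(-1,-1)=\tilde\ell_x\cap\tilde\ell_y$, and two ordinary triple points at $(-1,0)=\ell_x\cap\tilde\ell_y\cap\ell$ and $(0,-1)=\ell_y\cap\tilde\ell_x\cap\ell$; at infinity there are two more triple points $[1:0:0]=\ell_x\cap\tilde\ell_x\cap\ell_\infty$ and $[0:1:0]=\ell_y\cap\tilde\ell_y\cap\ell_\infty$, together with the node $[1:-1:0]=\ell\cap\ell_\infty$. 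I would then pick a pencil of lines based at a generic point of $\PP^2$, chosen so that no member of the arrangement is a fiber and the seven singularities are separated by the base parameter.

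A generic fiber meets the arrangement transversally in six points, giving six meridians $\gamma_x,\gamma_y,\gamma_\ell,\tilde\gamma_x,\tilde\gamma_y,\gamma_\infty$; the projective relation eliminates $\gamma_\infty$ as the inverse of a product of the remaining five in the cyclic order read off the fiber, leading to $\gamma_\infty=(\tilde\gamma_{x}\gamma_\ell\gamma_x\tilde\gamma_{y}\gamma_y)^{-1}$. At each singularity of multiplicity $k$ the braid monodromy contributes $k-1$ commutation relations among suitable conjugates of the local meridians: the two affine nodes produce the commutators \ref{rel1-xy} and \ref{rel1-tilde-l1}; the two affine triple points produce, after tracking the conjugating words along the monodromy paths, the pairs \ref{rel1-tilde-y1}--\ref{rel1-tilde-y2} at $(-1,0)$ and \ref{rel1-tilde-x1}--\ref{rel1-tilde-x2} at $(0,-1)$. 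The relations coming from the three singular points at infinity turn out to be consequences of those already listed together with the projective relation, so the presentation closes up as stated.

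For the orbifold part, the orbifold fundamental group of $\PP^2$ with orbifold locus of order $d$ along $\ell_x\cup\ell_y\cup\ell_\infty$ and with $\ell\cup\tilde\ell_x\cup\tilde\ell_y$ removed is by definition $G$ modulo the normal closure of the $d$-th powers of the meridians of the three orbifold components, which are precisely $\gamma_x^d$, $\gamma_y^d$ and $\gamma_\infty^d$, yielding relations \ref{rel1-x}--\ref{rel1-z}. The main obstacle is the careful bookkeeping of the braid monodromy: as the generic fiber is transported around a singular fiber the canonical meridians get conjugated, so the precise conjugating factor inside each commutator (for instance the extra $\gamma_y$ in $[\tilde\gamma_x\gamma_y,\gamma_\ell]=1$) depends delicately on the chosen monodromy paths; verifying that the stated factors are the correct ones and that the three singular points at infinity contribute no new information is the principal computational task.
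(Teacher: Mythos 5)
Your proposal is correct and is essentially the computation the paper itself omits: a Zariski--van Kampen/braid-monodromy argument read off the real picture, with the seven singular points correctly located and the relations (G1)--(G6) correctly attributed to the two affine nodes and two affine triple points. The one step you defer --- showing that the three singular points at infinity contribute nothing new --- can be avoided entirely by observing that $\ell_\infty$ is a component of the arrangement, so $\PP^2\setminus(\ell_x\cup\ell_y\cup\tilde\ell_x\cup\tilde\ell_y\cup\ell\cup\ell_\infty)=\CC^2\setminus(\ell_x\cup\ell_y\cup\tilde\ell_x\cup\tilde\ell_y\cup\ell)$ and the affine van Kampen theorem for a generic linear projection produces exactly the six stated relations from the four affine singular points, with $\gamma_\infty$ then recovered from the boundary of a large disc as $(\tilde\gamma_x\cdot\gamma_\ell\cdot\gamma_x\cdot\tilde\gamma_y\cdot\gamma_y)^{-1}$.
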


It is important to stress that the previous presentations have a geometrical meaning which will be relevant in the 
upcoming calculations. The generators $\gamma_x$, $\gamma_\ell$, and $\tilde\gamma_x$ on the dashed vertical line in 
Figure~\ref{fig:lines1} are meridians around the lines $\ell_x$, $\ell$, and $\tilde\ell_x$ respectively. They are
chosen as a geometrical basis on the punctured line and their product $\tilde\gamma_{x}\cdot\gamma_\ell\cdot \gamma_x$
is the inverse of a meridian around the point at infinity of such a line. The remaining generators $\tilde\gamma_{y}$, 
$\gamma_y$ are meridians around the lines $\tilde\ell_y$ and $\ell_y$ respectively, on a horizontal line in 
Figure~\ref{fig:lines1}.

namely, $\gamma_x$ (resp.~$\gamma_y$) is a meridian of $\ell_x$ (resp.~$\ell_y$), 
$\tilde\gamma_x$ (resp.~$\tilde\gamma_y$) is a meridian of $\tilde\ell_x$ (resp.~$\tilde\ell_y$),
and so are $\gamma_\ell$ with respect to $\ell$ and $\gamma_\infty$ with respect to $\ell_\infty$.

Following the construction given in section~\ref{sec:construction} one needs to study the Kummer cover 
\[
\sigma:\PP^2\to\PP^2,\qquad [x:y:z]\mapsto[x^d:y^d:z^d].
\]
Note that $\sigma$ is an abelian cover of order $d^2$ ramified along 
$\ell_x\cup \ell_y\cup \ell_\infty$ with ramification index $d$ whose group of deck transformations is 
$\ZZ/d\ZZ\times \ZZ/d\ZZ$. A possible strategy is to decompose $\sigma$ in two cyclic covers. 
The first one $\sigma_1:S\to\bp^2$ can be seen as an orbifold covering ramified along the lines $\ell_y$ and $\ell_\infty$. 
The second one $\sigma_2:\bp^2\to S$ can be seen as an orbifold covering ramified along the preimages by~$\sigma_1$ 
of $\ell_x$ and $[1:0:0]$. If we remove the line arrangement, the covering $\sigma_1$ is defined by an epimorphism 
$\rho_1:\tilde G\to\mu_d$, $\rho(\gamma_y):=\zeta$, $d$-root of unity, and
$\rho(\gamma_x)=\rho(\tilde\gamma_{x})=\rho(\tilde\gamma_{y})=\rho(\gamma_\ell)=1$. 
Let $\hat{K}:=\ker\rho_1$. Note that $\sigma_1^{-1}(\tilde\ell_y)$ breaks in $d$ irreducible 
components, whose meridians are $\gamma_y^j\cdot\tilde\gamma_{y}\cdot \gamma_y^{j-1}$,
$0\leq j<d$. By Proposition~\ref{prop:construction}\ref{prop:dos}, we are interested in
keeping only one of these lines, say the one associated with~$\tilde\gamma_{y}$. Let $\hat{K}_1$
be the quotient of $\hat{K}$ obtained by killing the remaining meridians. It is the 
orbifold fundamental group of the complement in~$S$ of the line arrangement with orbifold
structure at the preimage of $\ell_x$ and $[1:0:0]$.

\begin{lema}
The group $\hat{K}_1$ is generated by $\gamma_x,\gamma_\ell,\tilde\gamma_{x},\tilde\gamma_{y}$
with relations:
\begin{multicols}{3}
\begin{enumerate}[label=\rm(K\arabic*)]
\item\label{rel1-x-k} $\gamma_x^d=1$,
\item\label{rel1-z-k} 
$(\tilde\gamma_{x}\cdot\gamma_\ell\cdot \gamma_x)^d\cdot\tilde\gamma_{y}=1$,
\item\label{rel1-xy-k} $[\tilde\gamma_{x},\tilde\gamma_{y}]=1$,
\end{enumerate}
\end{multicols}
\vspace{-7mm}
\begin{multicols}{3}
\begin{enumerate}[label=\rm(K\arabic*)]
\setcounter{enumi}{3}
\item\label{rel1-tilde-x1-k} $\left[(\ell\cdot\tilde\gamma_{x})^d,\tilde\gamma_{x}\right]=1$,
\item\label{rel1-tilde-y1-k} $[\gamma_x\cdot\tilde\gamma_{y},\gamma_\ell]=1$,
\item\label{rel1-tilde-y2-k} $[\gamma_\ell\cdot \gamma_x,\tilde\gamma_{y}]=1$,
\end{enumerate}
\end{multicols}
\vspace{-3mm}
\begin{enumerate}[label=\rm(K\arabic*)]
\setcounter{enumi}{6}
 \item \label{rel1-j-k} $\left[\gamma_x,(\ell\cdot\tilde\gamma_{x})^{-j}\cdot\gamma_\ell\cdot(\gamma_\ell\cdot\tilde\gamma_{x})^j\right]=1$,  for  $0<j<d$.
\end{enumerate}

\end{lema}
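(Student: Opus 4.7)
My plan is to apply the Reidemeister--Schreier rewriting method to the given presentation of $\tilde G$, using the Schreier transversal $T = \{1, \gamma_y, \gamma_y^2, \ldots, \gamma_y^{d-1}\}$ for the index-$d$ subgroup $\hat K = \ker\rho_1$. This is a genuine transversal because (G8) forces $\gamma_y^{\,d} = 1$ in $\tilde G$ and $\rho_1(\gamma_y)$ generates $\mu_d$. The method produces a raw list of generators $a_j := \gamma_y^{\,j}\, a\, \gamma_y^{-j}$ of $\hat K$ for $a \in \{\gamma_x, \gamma_\ell, \tilde\gamma_x, \tilde\gamma_y\}$ and $0 \le j \le d-1$; the generator $\gamma_y^{\,d}$ is trivial by (G8). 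Forming the quotient $\hat K_1$ kills $\tilde\gamma_{y,j}$ for $j \ge 1$ and leaves $\tilde\gamma_y := \tilde\gamma_{y,0}$.

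The central step is to collapse these $4d$ generators down to four. Relation (G4) immediately gives $\gamma_{x,j} = \gamma_x$ for every $j$. Combining (G2) and (G3) via
\[
\gamma_y (\gamma_\ell \tilde\gamma_x) \;\stackrel{\text{(G2)}}{=}\; \tilde\gamma_x\, \gamma_y\, \gamma_\ell \;\stackrel{\text{(G3)}}{=}\; (\gamma_\ell \tilde\gamma_x)\, \gamma_y,
\]
the element $b := \gamma_\ell \tilde\gamma_x$ commutes with $\gamma_y$. Hence $\gamma_{\ell,j}\, \tilde\gamma_{x,j} = b$ for every $j$, so $\gamma_{\ell,j} = b\, \tilde\gamma_{x,j}^{-1}$. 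The Schreier rewrite of (G2) at the $j$-th coset, after this substitution, simplifies to the recursion $\tilde\gamma_{x,j+1} = b^{-1}\, \tilde\gamma_{x,j}\, b$. By induction one obtains
\[
\tilde\gamma_{x,j} = b^{-j}\, \tilde\gamma_x\, b^{\,j}, \qquad \gamma_{\ell,j} = b^{-j}\, \gamma_\ell\, b^{\,j},
\]
so every $a_j$ lies in the subgroup generated by $\{\gamma_x, \gamma_\ell, \tilde\gamma_x, \tilde\gamma_y\}$, which therefore generate $\hat K_1$.

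Next, I would derive the relations. (K1) is (G7); (K3), (K5), (K6) are the rewrites of (G1), (G5), (G6) at the trivial coset. The periodicity $\tilde\gamma_{x,d} = \tilde\gamma_x$, forced by $\gamma_y^{\,d} = 1$ together with the formula above, yields $b^{-d}\, \tilde\gamma_x\, b^{\,d} = \tilde\gamma_x$, i.e.\ (K4). For $0 < j < d$, the Schreier rewrite of (G5) at coset $j$ reduces, after using $\tilde\gamma_{y,j} = 1$ in $\hat K_1$ and $\gamma_{x,j} = \gamma_x$, to the commutator $[\gamma_x, \gamma_{\ell,j}] = [\gamma_x, b^{-j} \gamma_\ell b^{\,j}] = 1$, which is (K7). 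Finally, (K2) is extracted from the Schreier rewrite of the orbifold relation $\gamma_\infty^{\,d} = (\tilde\gamma_x \gamma_\ell \gamma_x \tilde\gamma_y \gamma_y)^{-d} = 1$ at $t = 1$: expanding the word gives $\prod_{k=0}^{d-1} (\tilde\gamma_{x,k}\, \gamma_{\ell,k}\, \gamma_x\, \tilde\gamma_{y,k}) = 1$, and using $\tilde\gamma_{y,k} = 1$ for $k \ne 0$, the identity $\tilde\gamma_{x,k}\gamma_{\ell,k} = b^{-k}(\tilde\gamma_x \gamma_\ell) b^{\,k}$, and the commutations (K7) to move the $\gamma_x$ factors past the $b$-conjugates of $\gamma_\ell$, the product collapses to $(\tilde\gamma_x \gamma_\ell \gamma_x)^d \tilde\gamma_y = 1$. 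The remaining Schreier rewrites of (G1), (G3), (G6), (G9) at non-trivial cosets are then routine Tietze consequences of (K1)--(K7).

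The main obstacle will be the simplification producing (K2). The $\gamma_x$ factors do not a priori commute with $b$, so one must iterate (K7) carefully to slide each $\gamma_x$ past the intervening $b^{-k} \gamma_\ell b^{\,k}$-blocks and realign the product into $d$ clean copies of $\tilde\gamma_x \gamma_\ell \gamma_x$; the algebra is routine but requires careful bookkeeping of the orders of the conjugates. A secondary point is to verify that the Schreier rewrite of (G3) at non-trivial cosets follows from (K1)--(K7), which is immediate once the formula $\tilde\gamma_{x,j} = b^{-j} \tilde\gamma_x b^{\,j}$ is established.
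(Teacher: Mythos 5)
Your proposal follows exactly the paper's argument: the same Reidemeister--Schreier rewriting along the transversal $\{\gamma_y^j\}_{0\le j<d}$, the same use of (G4) to identify all the $\gamma_{x,j}$ with $\gamma_x$, the same combination of (G2) and (G3) to express $\gamma_{\ell,j}$ and $\tilde\gamma_{x,j}$ as conjugates by $b=\gamma_\ell\cdot\tilde\gamma_x$ (with the wrap-around at $j=d$ yielding (K4)), and the same sources for (K1), (K3), (K5)--(K7). The one step you flag as delicate --- collapsing the rewrite of $\gamma_\infty^d=1$ into (K2), which requires more than just the commutations of $\gamma_x$ with the conjugates of $\gamma_\ell$ --- is precisely the step the paper asserts with no detail at all, so your sketch is at least as complete as the published proof.
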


\begin{proof}
We use Reidemeister-Schreier method. For each generator $\alpha$ of $\tilde G$, different from $\gamma_y$,
the generators are $\alpha_j:=\gamma_y^j\cdot\alpha\cdot \gamma_y^{-j}$, $j=0,1,\dots,d-1$. The element
$\gamma_y^d$ should be also a generator, which can be avoided by~\ref{rel1-y}.
Using~\ref{rel1-tilde-l1}, we obtain
\begin{equation*}
\gamma_{x_j}:=\gamma_y^j\cdot \gamma_x\cdot \gamma_y^{-j}=\gamma_x,
\end{equation*}
and we keep only $\gamma_{x_0}=\gamma_x$ as generator from $\gamma_{x_0},\dots,\gamma_{x_{d-1}}$.
The relation~\ref{rel1-x} is kept (all its conjugates provide the same relation).
By the definition of $\hat{K}_1$, we keep only $\tilde\gamma_{y}=\tilde\gamma_{{y}_0}$ as
$\tilde\gamma_{{y}_j}=1$, for $0<j<d$.

The relation~\ref{rel1-xy} also remains and it is unique. We use relations~\ref{rel1-tilde-x1}
and~\ref{rel1-tilde-x2} to eliminate some generators since
\begin{equation*}
\gamma_{\ell_j}=\gamma_y^j\cdot\gamma_\ell\cdot \gamma_y^{-j}=
(\gamma_\ell\cdot\tilde\gamma_{x})^{-j}\cdot\gamma_\ell\cdot(\gamma_\ell\cdot\tilde\gamma_{x})^j,
\quad 
\tilde\gamma_{x_j}=\gamma_y^j\cdot\tilde\gamma_{x}\cdot \gamma_y^{-j}=
(\gamma_\ell\cdot\tilde\gamma_{x})^{-j}\cdot\tilde\gamma_{x}\cdot(\gamma_\ell\cdot\tilde\gamma_{x})^j,
\end{equation*}
producing the relation
\begin{equation*}
\left[(\gamma_\ell\cdot\tilde\gamma_{x})^d,\tilde\gamma_{x}\right]=1.
\end{equation*}
The relations given by~\ref{rel1-tilde-y1}
and~\ref{rel1-tilde-y2} are kept and produce for $0<j<d$:
\begin{equation*}
\left[\gamma_x,(\gamma_\ell\cdot\tilde\gamma_{x})^{-j}\cdot\gamma_\ell\cdot(\gamma_\ell\cdot\tilde\gamma_{x})^j\right]=1.
\end{equation*}
Finally the relation~\ref{rel1-z} becomes
\begin{equation*}
(\tilde\gamma_{x}\cdot\gamma_\ell\cdot \gamma_x)^d\cdot\tilde\gamma_{y}=1.
\qedhere
\end{equation*}
\end{proof}

The covering $\rho_2$ is defined by the morphism $\rho_2:\hat{K}_1\to\mu_d$ such that 
$\rho_2(\gamma_x)=\zeta$ and the other generators are sent to~$1$. 

\begin{lema}
The group $\tilde{K}:=\ker\rho_2$ is generated by
$\gamma_\ell,\tilde\gamma_{y},\tilde\gamma_{x_i}:=\gamma_x^i\cdot\tilde\gamma_{x}\cdot \gamma_x^{-i}$, $0\leq i<d$
with relations:
\begin{enumerate}[label=\rm($\tilde{\text{K}}$\arabic*)]
\item $\left[(\tilde\gamma_{y}\cdot\gamma_\ell)^d,\tilde\gamma_{y}\right]=1$,
\item\label{tk2} $[\tilde\gamma_{x_i},\left(\tilde\gamma_{y}\cdot\gamma_\ell\right)^{-i}\cdot\tilde\gamma_{y}\cdot
\left(\tilde\gamma_{y}\cdot\gamma_\ell\right)^{i}]=1$ for $0\leq i<d$,
\item\label{tk3} $\prod_{j=0}^{d-1}(\tilde\gamma_{x_j}\cdot(\tilde\gamma_{y}\cdot\gamma_\ell)^{-j}\cdot\gamma_\ell
\cdot (\tilde\gamma_{y}\cdot\gamma_\ell)^{j})\cdot \tilde\gamma_{y}=1$,
\item\label{tk4} $\left[(\left(\tilde\gamma_{y}\cdot\gamma_\ell\right)^{-i}\cdot\gamma_\ell\cdot
\left(\tilde{y}\cdot\gamma_\ell\right)^{i}\cdot\tilde\gamma_{x_i})^d,\tilde\gamma_{x_i}\right]=1$
for $0\leq i<d$,
\item\label{tk5} $1=[\gamma_\ell,(\gamma_\ell\cdot\left(\tilde\gamma_{y}\cdot\gamma_\ell\right)^{i}
\cdot\tilde\gamma_{x_i}\cdot\left(\tilde\gamma_{y}\cdot\gamma_\ell\right)^{-i})^j\cdot \left(\tilde\gamma_{y}\cdot\gamma_\ell\right)^{-1}\cdot (\gamma_\ell\cdot\left(\tilde\gamma_{y}\cdot\gamma_\ell\right)^{i+1}\cdot\tilde\gamma_{x_{i+1}}\cdot
\left(\tilde\gamma_{y}\cdot\gamma_\ell\right)^{-i-1})^{-j}]$, for $0<j<d$ and $0\leq i<d$.
\end{enumerate}
\end{lema}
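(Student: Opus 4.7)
The plan is to apply the Reidemeister--Schreier method exactly as in the previous lemma, now with respect to the index-$d$ normal subgroup $\tilde{K} = \ker \rho_2$ of $\hat{K}_1$, using the Schreier transversal $T = \{1, \gamma_x, \gamma_x^2, \ldots, \gamma_x^{d-1}\}$ (a valid transversal by~\ref{rel1-x-k}). For each of the remaining generators $\alpha \in \{\gamma_\ell, \tilde\gamma_x, \tilde\gamma_y\}$ this produces $d$ conjugate generators $\alpha_i := \gamma_x^i \cdot \alpha \cdot \gamma_x^{-i}$ for $0 \le i < d$; the $\gamma_x$--Schreier generators are either trivial or equal to $\gamma_x^d = 1$.

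The key observation is that the element $w := \tilde\gamma_y \cdot \gamma_\ell$ commutes with $\gamma_x$ in $\hat{K}_1$. Indeed, using~\ref{rel1-tilde-y1-k} to move the leftmost factor and~\ref{rel1-tilde-y2-k} in the form $\gamma_x \tilde\gamma_y \gamma_x^{-1} = \gamma_\ell^{-1} \tilde\gamma_y \gamma_\ell$, one computes
\[
\gamma_x \cdot w \cdot \gamma_x^{-1}
= \gamma_x \tilde\gamma_y \gamma_\ell \gamma_x^{-1}
= \gamma_\ell \cdot (\gamma_x \tilde\gamma_y \gamma_x^{-1})
= \gamma_\ell \cdot \gamma_\ell^{-1} \tilde\gamma_y \gamma_\ell
= \tilde\gamma_y \gamma_\ell = w.
\]
Conjugating $\tilde\gamma_y \gamma_\ell = w$ by $\gamma_x^i$ gives $\tilde\gamma_{y_i} \gamma_{\ell, i} = w$ for every~$i$. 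Combining this with the Schreier rewrite of~\ref{rel1-tilde-y1-k} at coset~$\gamma_x^i$, namely $\tilde\gamma_{y_{i+1}} \gamma_{\ell, i+1} \tilde\gamma_{y_{i+1}}^{-1} = \gamma_{\ell, i}$, one obtains the recursion $\tilde\gamma_{y_{i+1}} = w^{-1} \tilde\gamma_{y_i} w$ and hence
\[
\tilde\gamma_{y_i} = w^{-i} \tilde\gamma_y w^i, \qquad \gamma_{\ell, i} = w^{-i} \gamma_\ell w^i, \qquad 0 \le i < d.
\]
Tietze transformations then remove $\tilde\gamma_{y_i}$ and $\gamma_{\ell, i}$ for $1 \le i < d$, leaving precisely the generators listed in the statement. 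The closure condition $\tilde\gamma_{y_d} = \gamma_x^d \tilde\gamma_y \gamma_x^{-d} = \tilde\gamma_y$ becomes $[w^d, \tilde\gamma_y] = 1$, which is the first relation of the statement.

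The remaining relations~\ref{tk2}--\ref{tk5} are obtained by Schreier-rewriting~\ref{rel1-xy-k},~\ref{rel1-z-k},~\ref{rel1-tilde-x1-k}, and~\ref{rel1-j-k} at every coset and applying the substitutions above. The rewrite of~\ref{rel1-xy-k} at coset~$\gamma_x^i$ is $[\tilde\gamma_{x_i}, \tilde\gamma_{y_i}] = 1$, which is~\ref{tk2}. The rewrite of~\ref{rel1-tilde-x1-k} gives $[(\gamma_{\ell,i} \tilde\gamma_{x_i})^d, \tilde\gamma_{x_i}] = 1$, i.e.~\ref{tk4}. The rewrite of~\ref{rel1-z-k} at coset~$1$ unfolds along the $d$ copies of $\tilde\gamma_x \gamma_\ell \gamma_x$ (each advancing the coset by one power of $\gamma_x$ and closing via $\gamma_x^d = 1$), yielding the product in~\ref{tk3}. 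Finally, each~\ref{rel1-j-k} rewrites as the statement that the word $(\gamma_{\ell, i} \tilde\gamma_{x_i})^{-j} \gamma_{\ell, i} (\gamma_{\ell, i} \tilde\gamma_{x_i})^j$ is independent of~$i$; after the substitution $\gamma_{\ell, i} = w^{-i} \gamma_\ell w^i$, this is exactly~\ref{tk5}.

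The main obstacle I anticipate is bookkeeping rather than anything conceptual. Reidemeister--Schreier a priori produces $d$ rewrites of each original relation, whereas the statement lists only a single instance of~\ref{tk3}; one must therefore check that the $i \neq 0$ rewrites of~\ref{rel1-z-k} lie in the normal closure (in the free group on the listed generators) of the stated relations together with $[w, \gamma_x] = 1$. A similar redundancy check is needed for the other relations. This is mechanical but delicate, as the substitutions for $\tilde\gamma_{y_i}$ and $\gamma_{\ell, i}$ expand the relations into long words involving powers of $w = \tilde\gamma_y \gamma_\ell$.
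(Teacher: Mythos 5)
Your proposal is correct and follows essentially the same Reidemeister--Schreier computation as the paper: the same transversal $\{1,\gamma_x,\dots,\gamma_x^{d-1}\}$, the same elimination of $\gamma_{\ell,i}$ and $\tilde\gamma_{y_i}$ via the two commutation relations (your observation that $w=\tilde\gamma_y\cdot\gamma_\ell$ is centralized by $\gamma_x$ is just a tidier packaging of the paper's substitution formulas), and the same relation-by-relation rewriting. The redundancy issue you flag at the end --- why only the coset-$0$ rewrite of the long relator appears, while the other relators are rewritten at all $d$ cosets --- is likewise left implicit in the paper's proof, so your argument is no less complete than the original.
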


\begin{proof}
We apply again Reidemeister-Schreier algorithm. We should start 
with generators $\gamma_{\ell_j}=\gamma_x^j\cdot\gamma_\ell \gamma_x^{-j}$,  $\tilde\gamma_{y_j}=
\gamma_x^j\cdot\tilde\gamma_{y} \gamma_x^{-j}$,
$\tilde\gamma_{x_j}=\gamma_x^j\cdot\tilde\gamma_{x} \gamma_x^{-j}$, and $\gamma_x^d$. The generator  
$\gamma_x^d$ drops needed because of~\ref{rel1-x-k}.
Using~\ref{rel1-tilde-y1-k} and~\ref{rel1-tilde-y2-k}, we eliminate some generators
\begin{equation*}
\gamma_{\ell_i}=\gamma_x^i\cdot\gamma_\ell\cdot \gamma_x^{-i}=
\left(\tilde\gamma_{y}\cdot\gamma_\ell\right)^{-i}\cdot\gamma_\ell\cdot \left(\tilde\gamma_{y}\cdot\gamma_\ell\right)^{i},\quad
\tilde\gamma_{y_i}=\gamma_x^i\cdot\tilde\gamma_{y}\cdot \gamma_x^{-i}=
\left(\tilde\gamma_{y}\cdot\gamma_\ell\right)^{-i}\cdot\tilde\gamma_{y}\cdot
\left(\tilde\gamma_{y}\cdot\gamma_\ell\right)^{i},
\end{equation*}
and obtain the relation
\begin{equation*}
\left[(\tilde\gamma_{y}\cdot\gamma_\ell)^d,\tilde\gamma_{y}\right]=1.
\end{equation*}
The generator statement has been proven.
The relation~\ref{rel1-xy-k} becomes $d$ relations ($0\leq i<d$)
\[
[\tilde\gamma_{x_i},\left(\tilde\gamma_{y}\cdot\gamma_\ell\right)^{-i}\cdot\tilde\gamma_{y}\cdot
\left(\tilde\gamma_{y}\cdot\gamma_\ell\right)^{i}]=1.
\]
Using the above commutation relations, the relation~\ref{rel1-z-k} becomes
\[
\prod_{j=0}^{d-1}(\tilde\gamma_{x_j}\cdot(\tilde\gamma_{y}\cdot\gamma_\ell)^{-j}\cdot\gamma_\ell\cdot 
(\tilde\gamma_{y}\cdot\gamma_\ell)^{j})\cdot \tilde\gamma_{y}=1.
\]
The relation~\ref{rel1-tilde-x1-k} becomes $d$ relations ($0\leq i<d$)
\begin{equation*}
\left[(\left(\tilde\gamma_{y}\cdot\gamma_\ell\right)^{-i}\cdot\gamma_\ell\cdot
\left(\tilde\gamma_{y}\cdot\gamma_\ell\right)^{i}\cdot\tilde\gamma_{x_i})^d,\tilde\gamma_{x_i}\right]=1.
\end{equation*}
Finally the relations \ref{rel1-j-k} produce $d(d-1)$ relations for $0<j<d$ and $0\leq i<d$:
\begin{gather*} 
\!(\left(\tilde\gamma_{y}\cdot\gamma_\ell\right)^{-i}\cdot\gamma_\ell\cdot
\left(\tilde\gamma_{y}\cdot\gamma_\ell\right)^{i}\cdot\tilde\gamma_{x_i})^{-j}\cdot
\left(\tilde\gamma_{y}\cdot\gamma_\ell\right)^{-i}\cdot\gamma_\ell\cdot
\left(\tilde\gamma_{y}\cdot\gamma_\ell\right)^{i}\cdot (\left(\tilde\gamma_{y}\cdot
\gamma_\ell\right)^{-i}\cdot\gamma_\ell\cdot\left(\tilde\gamma_{y}\cdot\gamma_\ell\right)^{i}\cdot \tilde\gamma_{x_i})^j=\\
(\left(\tilde\gamma_{y}\!\cdot\!\gamma_\ell\right)^{-i-1}\!\!\!\cdot\!\gamma_\ell\!\cdot\!\left(\tilde\gamma_{y}\cdot
\gamma_\ell\right)^{i+1}\!\!\cdot\tilde\gamma_{x_{i+1}})^{-j}\!\cdot\!\left(\tilde\gamma_{y}\!\!\cdot
\gamma_\ell\right)^{-i-1}\!\!\!\cdot\!\gamma_\ell\!\cdot\!\left(\tilde\gamma_{y}\!\cdot\!\gamma_\ell\right)^{i+1}\!\!\cdot\!
(\!\left(\tilde\gamma_{y}\!\cdot\!\gamma_\ell\right)^{-i-1}\!\!\!\cdot\!\gamma_\ell\!\cdot\!\left(\tilde\gamma_{y}\!\cdot\!
\gamma_\ell\right)^{i+1}\!\!\cdot\!\tilde\gamma_{x_{i+1}})^j,
\end{gather*}
that is,
\begin{gather*}
1=[\gamma_\ell,(\gamma_\ell\cdot\left(\tilde\gamma_{y}\cdot\gamma_\ell\right)^{i}\cdot\tilde\gamma_{x_i}\cdot
\left(\tilde\gamma_{y}\cdot\gamma_\ell\right)^{-i})^j\cdot \left(\tilde\gamma_{y}\cdot\gamma_\ell\right)^{-1}\cdot
(\gamma_\ell\cdot\left(\tilde\gamma_{y}\cdot\gamma_\ell\right)^{i+1}\cdot\tilde\gamma_{x_{i+1}}\cdot
\left(\tilde\gamma_{y}\cdot\gamma_\ell\right)^{-i-1})^{-j}].
\qedhere
\end{gather*}
\end{proof}

\subsection{Main results on fundamental groups}
\mbox{}

In this section we summarize the main consequences in terms of fundamental groups for the curves in the 
equisingular stratum $\hat{\Sigma}^{(d)}_\zeta$.

\begin{thm}\label{thm:sigmazeta}
Let $\cC\in\hat{\Sigma}^{(d)}_\zeta$, $\zeta\neq 1$, $d>2$. Its fundamental group is abelian, in particular 
$\pi_1(\PP^2\setminus \cC)=\ZZ^3$.
\end{thm}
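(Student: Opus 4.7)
The strategy is to produce a concrete representative in each component $\hat\Sigma^{(d)}_\zeta$ via the Kummer construction of Section~\ref{sec:fermat} and to exploit the presentation of $\tilde K$ derived above. By Proposition~\ref{prop:construction}\ref{prop:dos}, for any $\zeta\in\TT_d^+\setminus\{1\}$ we may write $\zeta=\tau_1\tau_2^{-1}$ with $\tau_1\ne\tau_2$ and obtain an explicit representative $\hat\cC_{d,\bt,2}\in\hat\Sigma^{(d)}_\zeta$. Since curves in the same equisingular component have homeomorphic complements (cf.~the proof of Theorem~\ref{thm:complement}), it suffices to compute $\pi_1(\PP^2\setminus\hat\cC_{d,\bt,2})$, and by construction this group is precisely $\tilde K$.

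The heart of the proof is then to show that $\tilde K$ is abelian. Setting $w:=\tilde\gamma_{y}\cdot\gamma_\ell$, relation~\ref{tk2} at $i=0$ gives the first commutation $[\tilde\gamma_{x_0},\tilde\gamma_{y}]=1$. The relations~\ref{tk5}, read across all pairs $(i,j)$ with $0\leq i<d$ and $0<j<d$, produce commutators between $\gamma_\ell$ and a family of words involving consecutive generators $\tilde\gamma_{x_i}$ and $\tilde\gamma_{x_{i+1}}$, twisted by powers of $w$. Combining these with the $d$-th power commutation~\ref{tk4} and the conjugate-commutations~\ref{tk2}, one should be able to establish first that each $\tilde\gamma_{x_i}$ commutes with both $\gamma_\ell$ and $\tilde\gamma_{y}$, and then that any two $\tilde\gamma_{x_i},\tilde\gamma_{x_j}$ commute. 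At that point, relation~\ref{tk3} collapses into the expected homological relation and introduces no further non-abelian constraint.

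Once $\tilde K$ is known to be abelian, the theorem follows quickly: by Hurewicz, $\tilde K$ equals $H_1(\PP^2\setminus\hat\cC_{d,\bt,2};\ZZ)$, which is generated by the meridians of the four irreducible components (one of degree~$d$ and three lines) subject to the single degree relation $d\mu_F+\mu_1+\mu_2+\mu_3=0$. Since $\gcd(d,1,1,1)=1$, this abelianization is $\ZZ^3$.

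The main obstacle is the combinatorial bookkeeping in the abelianness step: the presentation has $d+2$ generators and on the order of $d^2$ relations, many of them involving deeply nested conjugations by powers of $w$. The hypothesis $\zeta\ne 1$ enters only through the choice of construction~\ref{prop:dos}, which requires $\tau_1\ne\tau_2$; the component $\hat\Sigma^{(d)}_1$ must be treated separately via the degeneration argument of construction~\ref{prop:tres}. The restriction $d>2$ is essential, as for $d=2$ the tricuspidal quartic has the non-abelian finite metabelian fundamental group $\BB_3(\mathbb S^2)$ recalled at the start of this section.
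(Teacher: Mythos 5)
Your high-level plan coincides with the paper's (pass to the explicit representative of Proposition~\ref{prop:construction}\ref{prop:dos} and compute with the Reidemeister--Schreier presentation), but there is a genuine error at the decisive step: the fundamental group of $\PP^2\setminus\hat\cC_{d,\bt,2}$ is \emph{not} $\tilde{K}$. The group $\tilde{K}$ belongs to the configuration containing \emph{all} $d$ lines with meridians $\tilde\gamma_{x_0},\dots,\tilde\gamma_{x_{d-1}}$ (together with the Fermat curve and the line of $\tilde\gamma_{y}$), whereas $\hat\cC_{d,\bt,2}$ retains only two of them; the correct group is the quotient $K_h$ of $\tilde{K}$ obtained by killing $\tilde\gamma_{x_j}$ for $j\neq 0,h$, where $\zeta=\zeta_d^{\pm h}$. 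This matters for two reasons. First, your stated goal --- ``show that $\tilde K$ is abelian'' --- is unachievable: $\tilde{K}$ is non-abelian, since killing $\tilde\gamma_{y}$ and $\tilde\gamma_{x_j}$ for $j\geq 3$ yields the group of Proposition~\ref{prop:triple}, which maps onto the free group $\langle a,b,c\mid abc=1\rangle\cong F_2$ upon setting $\gamma_\ell=1$. (Consistently, the abelianization of $\tilde K$ is $\ZZ^{d+1}$, which already contradicts the $\ZZ^3$ you compute in your last paragraph from the four components of $\hat\cC_{d,\bt,2}$.) Second, the hypotheses $\zeta\neq 1$ and $d>2$ are used precisely to arrange $h\geq 2$ (replacing $\zeta$ by $\bar\zeta$ if necessary), so that $\tilde\gamma_{x_1}=1$ in $K_h$; it is this vanishing that makes relation~\ref{tk5} at $i=0$ and $j=1,2$ collapse to $[\gamma_\ell,\tilde\gamma_{x_0}\cdot\gamma_\ell^{-1}\cdot\tilde\gamma_{y}^{-1}]=[\gamma_\ell,\tilde\gamma_{x_0}^{2}\cdot\gamma_\ell^{-1}\cdot\tilde\gamma_{y}^{-1}]=1$, whence $\gamma_\ell$ commutes with $\tilde\gamma_{x_0}$ and $\tilde\gamma_{y}$, after which the remaining relations abelianize the group.

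Relatedly, your middle paragraph defers the entire mathematical content: ``one should be able to establish'' the commutations is not an argument, and, as the non-abelianness of $\tilde K$ shows, whether those commutations hold depends entirely on which quotient one works in. To repair the proof you must (i) identify $\pi_1(\PP^2\setminus\hat\cC_{d,\bt,2})$ as $K_h$ with $h\geq 2$, and (ii) carry out the computation with relations \ref{tk2}--\ref{tk5} in that quotient, as the paper does. Your closing abelianization count ($\ZZ^4$ modulo the degree relation $(d,1,1,1)$, giving $\ZZ^3$) is correct once abelianness of $K_h$ is established, and your remark about $d=2$ is accurate, though the failure there is also structural: for $d=2$ one cannot achieve $h\geq 2$.
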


\begin{proof}
It is enough to prove the statement for a curve  as in
Proposition{\rm~\ref{prop:construction}\ref{prop:dos}}, for which its fundamental group is  $K_h$, $0<h<d$, the quotient of $\hat{K}_1$ obtained by
\emph{killing} $\tilde\gamma_{x_j}$, if $j\neq 0,h$. We can assume $h\geq 2$.

The relation~\ref{tk2} produces $[\tilde\gamma_{x_0},\tilde\gamma_{y}]=[\tilde\gamma_{x_h},(\tilde\gamma_{y}\cdot
\gamma_\ell)^{-h}\cdot\tilde\gamma_{y}\cdot(\tilde\gamma_{y}\cdot\gamma_\ell)^{h}]=1$. The relation~\ref{tk3} becomes
\[
\tilde\gamma_{x_0}\cdot(\tilde\gamma_{y}\cdot\gamma_\ell)^{h}\cdot
\tilde\gamma_{x_h}\cdot(\tilde\gamma_{y}\cdot\gamma_\ell)^{-h}\cdot(\tilde\gamma_{y}\cdot\gamma_\ell)^{d}\cdot
\tilde\gamma_{y}^{1-d}=1
\]
and $\tilde\gamma_{x_h}$ can be taken out from the generators. The second relation above is equivalent to the commutator
$[(\tilde\gamma_{y}\cdot\gamma_\ell)^{d},\tilde\gamma_{x_0}]=1$.
For \ref{tk4} we obtain
\[
\left[(\gamma_\ell
\cdot\tilde\gamma_{x_0})^d,\tilde\gamma_{x_0}\right]=
\left[(\left(\tilde\gamma_{y}\cdot\gamma_\ell\right)^{-h}\cdot\gamma_\ell\cdot
\left(\tilde\gamma_{y}\cdot\gamma_\ell\right)^{h}\cdot\tilde\gamma_{x_h})^d,\tilde\gamma_{x_h}\right]=1.
\]
Let us consider \ref{tk5} for $j=1,2$:
\[
[\gamma_\ell,\left(\tilde\gamma_{y}\cdot\gamma_\ell\right)^{i}\cdot\tilde\gamma_{x_i}\cdot
\tilde\gamma_{x_{i+1}}^{-1}\cdot\left(\tilde\gamma_{y}\cdot\gamma_\ell\right)^{-i-1}]=1
\]
and
\[
1=[\gamma_\ell,
\left(\tilde\gamma_{y}\cdot\gamma_\ell\right)^{i}\cdot\tilde\gamma_{x_i}^2\cdot\tilde\gamma_{x_{i+1}}^{-2}\cdot
\left(\tilde\gamma_{y}\cdot\gamma_\ell\right)^{-i-1}].
\]
The above relations for $i=0$ become
\[
1=[\gamma_\ell,\tilde\gamma_{x_0}\cdot\gamma_\ell^{-1}\cdot\tilde\gamma_{y}^{-1}]=
[\gamma_\ell,\tilde\gamma_{x_0}^2\cdot\gamma_\ell^{-1}\cdot\tilde\gamma_{y}^{-1}].
\]
As a consequence $[\gamma_\ell,\tilde\gamma_{x_0}]=[\gamma_\ell,\tilde\gamma_{y}]$ and the group is abelian.
\end{proof}

\begin{prop}\label{prop:triple}
Let $\cC$ be a curve as in Proposition{\rm~\ref{prop:construction}\ref{prop:tres}}, $d>3$. Its fundamental group is 
generated by $\gamma_\ell,\tilde\gamma_{x_0},\tilde\gamma_{x_1},\tilde\gamma_{x_2}$, where $\gamma_\ell$ is central
and $\tilde\gamma_{x_0}\cdot\tilde\gamma_{x_1}\cdot\tilde\gamma_{x_2}\cdot\gamma_\ell^d=1$.
The relations $\tilde\gamma_{x_0}\cdot\tilde\gamma_{x_1}\cdot\tilde\gamma_{x_2}=\tilde\gamma_{x_1}\cdot
\tilde\gamma_{x_2}\cdot\tilde\gamma_{x_0}=\tilde\gamma_{x_2}\cdot\tilde\gamma_{x_0}\cdot\tilde\gamma_{x_1}$ correspond 
to the ordinary triple point of intersection of the three lines.
\end{prop}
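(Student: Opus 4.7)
The plan is to realize $\pi_1(\bp^2\setminus\hat\cC_{d,\bt,3})$ as an explicit quotient of the group $\tilde{K}$ from the preceding lemma. Since $\hat\cC_{d,\bt,3}$ retains only the Fermat curve (meridian $\gamma_\ell$) and three of the $d$ tangent lines of type $L_{x,\tau}$ (meridians $\tilde\gamma_{x_0},\tilde\gamma_{x_1},\tilde\gamma_{x_2}$), while dropping the $L_{z,\tau_3}$-type line (meridian $\tilde\gamma_y$), I would impose on the presentation of $\tilde K$ the relations
\[
\tilde\gamma_y=1,\qquad \tilde\gamma_{x_j}=1 \text{ for } j\notin\{0,1,2\},
\]
with the cyclic convention $\tilde\gamma_{x_d}:=\tilde\gamma_{x_0}$ inherited from the relation $\gamma_x^d=1$ in $\hat K_1$. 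The hypothesis $d>3$ is precisely what guarantees that at least one of the $\tilde\gamma_{x_j}$'s is actually killed.

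First I would simplify $(\tilde{\text{K}}1)$ through $(\tilde{\text{K}}5)$ under these substitutions. Relations $(\tilde{\text{K}}1)$ and $(\tilde{\text{K}}2)$ trivialize. Relation $(\tilde{\text{K}}3)$ collapses to
\[
\tilde\gamma_{x_0}\cdot\gamma_\ell\cdot\tilde\gamma_{x_1}\cdot\gamma_\ell\cdot\tilde\gamma_{x_2}\cdot\gamma_\ell^{d-2}=1,
\]
and $(\tilde{\text{K}}4)$ reduces to $[(\gamma_\ell\tilde\gamma_{x_i})^d,\tilde\gamma_{x_i}]=1$ for $i=0,1,2$. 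The pivotal step is extracting the centrality of $\gamma_\ell$ from $(\tilde{\text{K}}5)$ at $j=1$: a case-by-case evaluation in the index $i$ shows that all values with both $\tilde\gamma_{x_i}$ and $\tilde\gamma_{x_{i+1}}$ killed produce trivial relations, whereas the boundary indices $i=0,1,2,d-1$ yield, after an elementary simplification using $[\gamma_\ell,\gamma_\ell^k u\gamma_\ell^{-k-1}]=1\iff[\gamma_\ell,u]=1$, the four commutator relations
\[
[\gamma_\ell,\tilde\gamma_{x_0}\tilde\gamma_{x_1}^{-1}]=1,\quad
[\gamma_\ell,\tilde\gamma_{x_1}\tilde\gamma_{x_2}^{-1}]=1,\quad
[\gamma_\ell,\tilde\gamma_{x_2}]=1,\quad
[\gamma_\ell,\tilde\gamma_{x_0}]=1.
\]
Chaining these gives $[\gamma_\ell,\tilde\gamma_{x_i}]=1$ for each $i\in\{0,1,2\}$, so $\gamma_\ell$ is central in the quotient.

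Once centrality is established, $(\tilde{\text{K}}3)$ simplifies to $\tilde\gamma_{x_0}\tilde\gamma_{x_1}\tilde\gamma_{x_2}\gamma_\ell^d=1$, $(\tilde{\text{K}}4)$ becomes vacuous, and all remaining instances of $(\tilde{\text{K}}5)$ for $j\geq 2$ become consequences of centrality. Finally, the cyclic-permutation (triple-point) relations are automatic: since $\tilde\gamma_{x_0}\tilde\gamma_{x_1}\tilde\gamma_{x_2}=\gamma_\ell^{-d}$ lies in the centre, conjugation by any $\tilde\gamma_{x_i}$ leaves this product invariant, which yields the chain of equalities $\tilde\gamma_{x_0}\tilde\gamma_{x_1}\tilde\gamma_{x_2}=\tilde\gamma_{x_1}\tilde\gamma_{x_2}\tilde\gamma_{x_0}=\tilde\gamma_{x_2}\tilde\gamma_{x_0}\tilde\gamma_{x_1}$. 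The main obstacle is the careful bookkeeping inside $(\tilde{\text{K}}5)$: one must correctly identify which boundary values of $i$ contribute a commutator with a single $\tilde\gamma_{x_k}^{\pm1}$ versus with a pairwise product, and verify that the hypothesis $d>3$ is exactly what makes this collection of commutators solve for full centrality.
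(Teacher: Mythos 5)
Your proposal is correct and follows essentially the same route as the paper: pass to the quotient of $\tilde{K}$ by $\tilde\gamma_y$ and $\tilde\gamma_{x_j}$ for $j\geq 3$, extract $[\gamma_\ell,\tilde\gamma_{x_0}]=[\gamma_\ell,\tilde\gamma_{x_2}]=1$ from the boundary instances of \ref{tk5} and the remaining commutator $[\gamma_\ell,\tilde\gamma_{x_1}]=1$ from the $i=0$, $j=1$ instance, after which \ref{tk3} collapses to the product relation. Your additional checks (vacuity of the remaining relations, the cyclic-permutation relations as conjugates of the central product, and the role of $d>3$) only make explicit what the paper leaves implicit.
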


\begin{proof}
This fundamental group is the quotient of $\tilde{K}$ obtained by \emph{killing} $\tilde\gamma_{y}$
and $\tilde\gamma_{x_j}$, $j\geq 3$.
The generators are $\gamma_\ell,\tilde\gamma_{x_0},\tilde\gamma_{x_1},\tilde\gamma_{x_2}$ and the relations are:
\begin{enumerate}[label=\rm($\hat{\text{K}}$\arabic*)]
\item $\prod_{j=0}^{2}(\tilde\gamma_{x_j}\cdot\gamma_\ell)\cdot\gamma_\ell^{d-3}=1$,
\item\label{k2} $\left[(\gamma_\ell\cdot\tilde\gamma_{x_i})^d,\tilde\gamma_{x_i}\right]=1$
for $0\leq i<3$,
\item\label{k3} $1=[\gamma_\ell,(\gamma_\ell^{i+1}\cdot\tilde\gamma_{x_i}\cdot\gamma_\ell^{-i})^j\cdot
\gamma_\ell^{-1}\cdot (\gamma_\ell^{i+2}\cdot\tilde\gamma_{x_{i+1}}\cdot\gamma_\ell^{-i-1})^{-j}]$
for $0<j<d$ and $0\leq i<2$,
\item $1=[\gamma_\ell,\tilde\gamma_{x_2}]$,
\item $1=[\gamma_\ell,\tilde\gamma_{x_0}]$.
\end{enumerate}
Let us study the relations \ref{k3}. For $i=0$ and $j=1$ we obtain $[\gamma_\ell,\tilde\gamma_{x_1}]=1$.
\end{proof}

\begin{cor}\label{cor:sigma1}
Let $\cC\in\hat{\Sigma}^{(d)}_1$, $d>3$. Its fundamental group is abelian, that is, $\pi_1(\PP^2\setminus \cC)=\ZZ^3$.
\end{cor}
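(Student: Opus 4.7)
The plan is to deduce the statement from the explicit presentation established in Proposition~\ref{prop:triple}, using the continuous degeneration supplied by Proposition~\ref{prop:construction}\ref{prop:tres}. Since $\hat{\Sigma}^{(d)}_1$ is connected, all of its members have homeomorphic complements via the ambient isotopy argument already invoked in the proof of Theorem~\ref{thm:sh}, so it suffices to compute $\pi_1(\PP^2\setminus\hat\cC_t)$ for some $t\in(0,\varepsilon]$ in the family $\{\hat\cC_t\}$ of Proposition~\ref{prop:construction}\ref{prop:tres}.

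That family specializes at $t=0$ to the degenerate curve $\hat\cC_{d,\bt,3}$, and for small $t>0$ the curves $\hat\cC_t$ and $\hat\cC_{d,\bt,3}$ agree outside an arbitrarily small ball $B$ around the point where the triangle of three tangent lines of $\hat\cC_t$ collapses onto the ordinary triple point of $\hat\cC_{d,\bt,3}$. Inside $B$, the local fundamental group of the triple-point model $\hat\cC_{d,\bt,3}$ is generated by the three meridians $\tilde\gamma_{x_0},\tilde\gamma_{x_1},\tilde\gamma_{x_2}$ subject exactly to the triple relations $\tilde\gamma_{x_0}\tilde\gamma_{x_1}\tilde\gamma_{x_2}=\tilde\gamma_{x_1}\tilde\gamma_{x_2}\tilde\gamma_{x_0}=\tilde\gamma_{x_2}\tilde\gamma_{x_0}\tilde\gamma_{x_1}$, whereas the three transverse nodes of $\hat\cC_t$ contribute instead the pairwise commutators $[\tilde\gamma_{x_i},\tilde\gamma_{x_j}]=1$. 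A standard Zariski--van Kampen specialization argument, gluing these local presentations to the unchanged complement outside $B$, therefore yields $\pi_1(\PP^2\setminus\hat\cC_t)$ as the quotient of $\pi_1(\PP^2\setminus\hat\cC_{d,\bt,3})$ by the three additional relations $[\tilde\gamma_{x_i},\tilde\gamma_{x_j}]=1$ for $0\le i<j\le 2$.

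Once these three commutators are imposed on the presentation of Proposition~\ref{prop:triple}, the whole group becomes abelian: $\gamma_\ell$ is already central by that proposition, and the three $\tilde\gamma_{x_i}$ now commute pairwise. The unique surviving relation $\tilde\gamma_{x_0}\cdot\tilde\gamma_{x_1}\cdot\tilde\gamma_{x_2}\cdot\gamma_\ell^d=1$ becomes a single linear dependence among four abelian generators, so $\pi_1(\PP^2\setminus\cC)\cong\ZZ^3$.

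The main obstacle lies in the second step: justifying rigorously that passing from the triple-point fiber to the generic triangular fiber of the family of Proposition~\ref{prop:construction}\ref{prop:tres} adds exactly the three node commutator relations and nothing else. This amounts to controlling the specialization at the level of local fundamental groups, identifying the local group at an ordinary triple point of three lines with a central extension of the free group on two generators and verifying that its quotient by $[\tilde\gamma_{x_i},\tilde\gamma_{x_j}]=1$ realizes the local complement of three transverse lines; the global statement then follows by gluing because the complement outside $B$ is topologically unchanged along the family.
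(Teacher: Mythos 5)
Your proposal is correct and follows essentially the same route as the paper: both degenerate a curve of $\hat{\Sigma}^{(d)}_1$ onto the triple-point configuration of Proposition~\ref{prop:construction}\ref{prop:tres}, replace the triple-point relations in the presentation of Proposition~\ref{prop:triple} by the three node commutators $[\tilde\gamma_{x_i},\tilde\gamma_{x_j}]=1$, and conclude abelianity from the centrality of $\gamma_\ell$ together with the single relation $\tilde\gamma_{x_0}\cdot\tilde\gamma_{x_1}\cdot\tilde\gamma_{x_2}\cdot\gamma_\ell^d=1$. The local-to-global gluing step you flag as the main obstacle is precisely the step the paper also asserts without further justification.
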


\begin{proof}
Let us consider the family $\{\cC_t\}_{t\in[0,1]}$ of Proposition~\ref{prop:construction}\ref{prop:tres}. 

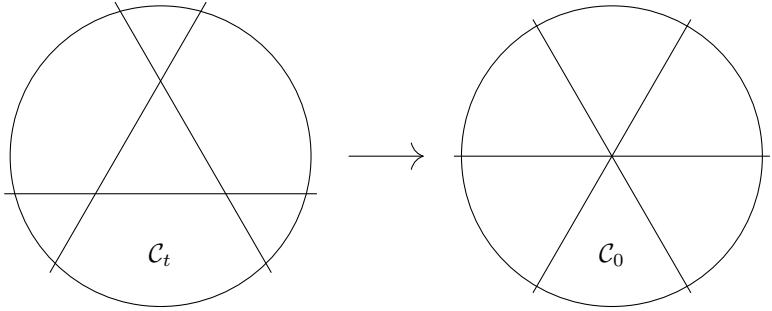
\begin{figure}[ht]
\begin{center}
\begin{tikzpicture}
\draw (0,0) circle [radius=2cm] node[below=1.05cm] {$\mathcal{C}_t$};
\draw ($1.7*(0,1)-.7*({-sqrt(3)/2},-1/2)$)--($-.7*(0,1)+1.7*({-sqrt(3)/2},-1/2)$);
\draw ($1.7*(0,1)-.7*({sqrt(3)/2},-1/2)$)--($-.7*(0,1)+1.7*({sqrt(3)/2},-1/2)$);
\draw ($1.7*({sqrt(3)/2},-1/2)-.7*({-sqrt(3)/2},-1/2)$)--($-.7*({sqrt(3)/2},-1/2)+1.7*({-sqrt(3)/2},-1/2)$);

\draw[-{[scale=2]>}] (2.5,0) -- (3.5,0);

\draw (6,0) circle [radius=2cm] node[below=1.05cm] {$\mathcal{C}_0$};
\draw ($(6,0)+(60:2.1)$)--($(6,0)-(60:2.1)$);
\draw ($(6,0)+(-60:2.1)$)--($(6,0)-(-60:2.1)$);
\draw ($(6,0)+(0:2.1)$)--($(6,0)-(0:2.1)$);
\end{tikzpicture}
\caption{Degeneration around the ordinary triple point}
\label{fig:deg1}
\end{center} 
\end{figure}

For $t$ small enough, these curves are equisingular outside a small ball centered at the ordinary triple point
of $\cC_0$, see Figure~\ref{fig:deg1}. The fundamental group of $\cC_t$ is obtained from the group in Proposition~\ref{prop:triple} 
by replacing the relations $\tilde\gamma_{x_0}\cdot\tilde\gamma_{x_1}\cdot\tilde\gamma_{x_2}=
\tilde\gamma_{x_1}\cdot\tilde\gamma_{x_2}\cdot\tilde\gamma_{x_0}=\tilde\gamma_{x_2}\cdot\tilde\gamma_{x_0}\cdot\tilde\gamma_{x_1}$ 
by $[\tilde\gamma_{x_0},\tilde\gamma_{x_1}]=[\tilde\gamma_{x_1},\tilde\gamma_{x_2}]=[\tilde\gamma_{x_2}\cdot\tilde\gamma_{x_0}]=1$, 
and the result follows.
\end{proof}

\section{An extended family of arithmetic Zariski tuples and an open question}
\label{sec:consecuencias}

It is worth mentioning that recently, the equisingular stratum $\hat\Sigma^{(d)}$ has been generalized as follows
(see~\cite{Sh:18,Sh:19}). Fix $d\geq 3$ and consider three ordered tuples of positive integers
$\mathbf{a}:=(a_1,\dots,a_{n_1})$, $\mathbf{b}:=(b_1,\dots,b_{n_2})$, and $\mathbf{c}:=(c_1,\dots,c_{n_3})$ such that
$d=\sum_i\mathbf{a}=\sum_j\mathbf{b}=\sum_k\mathbf{c}$.

An \emph{Artal-Shirane curve} (defined as Artal curve in~\cite{Sh:18}) of type $(d;\mathbf{a},\mathbf{b},\mathbf{c})$ is a union of a smooth curve of degree~$d$ 
and three lines $L_{\mathbf{a}},L_{\mathbf{b}},L_{\mathbf{c}}$ such that the smooth curve intersects the line 
$L_{\mathbf{a}}$ (resp.~$L_{\mathbf{b}}$, $L_{\mathbf{c}}$) at $n_1$ (resp.~$n_2$, $n_3$) points with intersection 
multiplicities $a_i$, (resp.~$b_j$, $c_k$). 

For $s:=\gcd(\mathbf{a},\mathbf{b},\mathbf{c})>1$ Shirane proved that these curves provide Zariski tuples related 
to $s$-roots of unity. He used an invariant called \emph{splitting graph} based on the linking invariant described 
in \S\ref{sec:link}. 

Moreover, the following generalization of Theorem~\ref{thm:complement} on the homeomorphism type of the complements
of such curves can be stated and proven with the same arguments.

\begin{thm}
Let $\cC_1,\cC_2$ be two Artal-Shirane curves of the same type, associated with distinct and non-conjugate roots of unity. 
Then, $\mathbb{P}^2\setminus\cC_1$ and $\mathbb{P}^2\setminus\cC_2$ are not homeomorphic.
\end{thm}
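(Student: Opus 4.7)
The plan is to mirror the proof of Theorem~\ref{thm:complement}, now with Shirane's splitting graph playing the role of the scalar linking invariant $\{\hat\xi(\gamma)^{\pm 1}\}$. Given a putative homeomorphism $\Psi:\mathbb{P}^2\setminus\cC_1\to\mathbb{P}^2\setminus\cC_2$, I would first take compact regular neighborhoods $U(\cC_i)$ of each Artal-Shirane curve and set $M_i:=\partial U(\cC_i)$. Each $M_i$ is the graph manifold dictated by the plumbing graph of the minimal embedded resolution of $\cC_i$, and since the two curves have the same combinatorial type $(d;\mathbf{a},\mathbf{b},\mathbf{c})$, their resolution graphs (and hence their boundary manifolds) are abstractly diffeomorphic.

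Next, I would promote $\Psi$ to a homeomorphism at the level of these boundary manifolds. Exactly as in the proof of Theorem~\ref{thm:complement}, $\Psi$ sends Milnor balls around local singularities of $\cC_1$ to sets which, pushed to the complement side, are collars of Milnor balls for singularities of $\cC_2$; hence one gets an induced isomorphism $\Psi^\infty:\pi_1(M_1)\to\pi_1(M_2)$ on fundamental groups at infinity. Because the $M_i$ are sufficiently large graph manifolds, Waldhausen's rigidity results (see \cite{wald:68,wal:67b} and the appendix in \cite{Neumann-Acalculus}) upgrade $\Psi^\infty$ to an honest homeomorphism $\Psi_M:M_1\to M_2$ that respects the JSJ/plumbing decomposition. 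In particular, meridional tori map to meridional tori, so a meridian of any component of $\cC_1$ is sent to a $\pm 1$-power of a meridian of a component of $\cC_2$ of the same combinatorial role.

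With $\Psi_M$ in hand, I would transport Shirane's splitting graph across it. Because the intersection pattern encoded by $(\mathbf{a},\mathbf{b},\mathbf{c})$ is preserved, the three distinguished lines $L_{\mathbf{a}},L_{\mathbf{b}},L_{\mathbf{c}}$ of $\cC_1$ must be sent to a permutation of the three lines of $\cC_2$, and the composed loop $\gamma=\gamma_1*\gamma_2*\gamma_3$ supported on their union (\S\ref{sec:link}) goes to a $\pm 1$-power of the corresponding loop for $\cC_2$. Consequently the pair $(\xi(\mu),\{\hat\xi(\gamma)^{\pm 1}\})$, and with it the whole splitting graph invariant, depends only on the homeomorphism type of the complement. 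Since the two curves are associated with distinct $s$-roots of unity not related by complex conjugation, these invariants differ, producing the required contradiction.

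The main obstacle will be the step in which one argues that $\Psi_M$ really preserves the role of the triple $L_{\mathbf{a}},L_{\mathbf{b}},L_{\mathbf{c}}$ as a distinguished set of linear components carrying the linking cycle, and that $\gamma$ can be identified intrinsically up to a $\pm 1$-power: the combinatorial graph automorphisms of the plumbing may permute components of $\cC$, and these permutations must be absorbed exactly as the original scalar invariant was absorbed into the unordered set $\{\hat\xi(\gamma)^{\pm 1}\}$ (or into the isomorphism class of the splitting graph). Once this rigidity of the plumbing structure, in the spirit of \emph{Waldhausen plus extra Milnor fibration data}, is established, the rest of the argument is essentially formal, as in Theorem~\ref{thm:complement}.
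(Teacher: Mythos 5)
Your proposal is correct and follows essentially the same route as the paper, which proves this theorem by observing that the boundary-manifold/Waldhausen argument of Theorem~\ref{thm:complement} carries over verbatim, so that meridians and the linking cycle are preserved up to sign and the linking (splitting) invariant becomes an invariant of the complement. The subtlety you flag about component permutations is exactly the point absorbed by passing to the unordered invariant, as in the original argument.
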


The interesting fact about these curves lies on the fact that the fundamental groups of their complements is as 
simple as it can be. This can be obtained as a consequence of our Theorem~\ref{thm:sigmazeta}, 
Corollary~\ref{cor:sigma1}, Theorem~\ref{thm:deg2} and an equisingular deformation via the following very 
useful result by Zariski proved in Dimca's book.

\begin{thm}[{\cite[Corollary 3.2]{dimca:li}}]\label{thm:deg2}
Let $\{\cC_t\}_{t\in[0,1]}$ be a continuous family of projective plane curves such that $\cC_0$ is reduced
and the family is equisingular for $(0,1]$. Then, there is an epimorphism 
$\pi_1(\bp^2\setminus\cC_0)\twoheadrightarrow\pi_1(\bp^2\setminus\cC_1)$.
\end{thm}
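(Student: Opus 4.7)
The plan is to produce the desired epimorphism by interposing the complement of a fixed regular neighborhood of the central fiber $\cC_0$. By the conic structure theorem for closed analytic subsets, one can choose a compact regular neighborhood $N$ of $\cC_0$ in $\bp^2$ such that $N\setminus \cC_0$ deformation retracts onto $\partial N$. Consequently the inclusion $\bp^2\setminus\text{int}(N)\hookrightarrow\bp^2\setminus\cC_0$ is a homotopy equivalence, giving a canonical isomorphism
\[
\pi_1\!\left(\bp^2\setminus\text{int}(N)\right)\;\xrightarrow{\sim}\;\pi_1\!\left(\bp^2\setminus\cC_0\right).
\]

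By continuity of the family on the compact parameter space, there exists $\varepsilon>0$ with $\cC_t\subset\text{int}(N)$ for every $t\in[0,\varepsilon]$. Equisingularity on $(0,1]$ combined with the Thom--Mather first isotopy lemma applied to the total space of the family stratified by equisingular type yields a locally trivial topological fibration over $(0,1]$ whose fiber at $t$ is $\bp^2\setminus\cC_t$; in particular all such complements are pairwise homeomorphic, so it suffices to exhibit the desired $\pi_1$-epimorphism onto $\pi_1(\bp^2\setminus\cC_\varepsilon)$. The task therefore reduces to showing that the natural inclusion $\bp^2\setminus\text{int}(N)\hookrightarrow\bp^2\setminus\cC_\varepsilon$ is surjective on fundamental groups.

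The main obstacle is precisely this surjectivity step, which I would attack via the Seifert--Van Kampen theorem applied to the open cover $\bp^2\setminus\cC_\varepsilon=U_1\cup U_2$, where $U_1$ is an open thickening of $\bp^2\setminus\text{int}(N)$ (still homotopy equivalent to it) and $U_2=\text{int}(N)\setminus\cC_\varepsilon$, with $U_1\cap U_2$ a connected collar of $\partial N$. The goal then becomes to show that the image of $\pi_1(U_2)\to\pi_1(\bp^2\setminus\cC_\varepsilon)$ lies in the image of $\pi_1(U_1)$. This is a local statement at each singular point $p$ of $\cC_0$: inside a Milnor ball $B_p$ the curve $\cC_\varepsilon$ is an equisingular deformation of $\cC_0\cap B_p$ confined to the Milnor tube, and every meridian of $\cC_\varepsilon\cap B_p$ is freely homotopic in $B_p\setminus \cC_\varepsilon$ to a loop on $\partial B_p$; patching these Milnor balls with the smooth locus of $\cC_0$ (where a genuine tubular neighborhood is available) shows that the generators of $\pi_1(U_2)$ are all realized by loops in $\partial N\subset U_1$. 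Making this local-to-global passage rigorous in the presence of the possibly wilder non-equisingular degeneration at $t=0$ is the delicate point, and it is the content of the Zariski-type degeneration argument behind \cite[Corollary 3.2]{dimca:li}.
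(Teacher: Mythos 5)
First, note that the paper does not prove this statement at all: it is quoted verbatim from Dimca's book, so there is no internal proof to compare against; your proposal has to stand on its own.

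Your reduction steps are fine (conic structure of a regular neighborhood $N$ of $\cC_0$, local triviality of the equisingular family over $(0,1]$ to pass from small $t$ to $t=1$), but the remaining claim --- that $\pi_1(\bp^2\setminus\mathrm{int}(N))\to\pi_1(\bp^2\setminus\cC_\varepsilon)$ is onto --- \emph{is} the theorem, and the Milnor-ball argument you sketch for it does not close up. Two concrete problems. (i) Even granting that every meridian of $\cC_\varepsilon$ is homotopic in $\bp^2\setminus\cC_\varepsilon$ to a loop on $\partial N$, you would only conclude that the image of $\pi_1(U_1)$ normally generates $\pi_1(\bp^2\setminus\cC_\varepsilon)$, not that it equals it; and $\pi_1(U_2)=\pi_1(\mathrm{int}(N)\setminus\cC_\varepsilon)$ is not generated by meridians of $\cC_\varepsilon$ together with boundary loops --- $N$ retracts onto $\cC_0$, which may carry genus, and a loop on $\cC_0$ pushed off $\cC_\varepsilon$ differs from its lift to $\partial N$ by meridians of $\cC_\varepsilon$ that you cannot yet control, so the argument becomes circular. (ii) Your argument never uses the hypothesis that $\cC_0$ is reduced, yet the statement fails without it (a non-reduced $\cC_0$ has the complement of its support, whose $\pi_1$ can be too small to surject). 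The standard way to fill the gap is the Zariski--Lefschetz argument: choose a line $L$ transverse to $\cC_0$ and generic for $\cC_\varepsilon$; then $\pi_1(L\setminus\cC_\varepsilon)\twoheadrightarrow\pi_1(\bp^2\setminus\cC_\varepsilon)$. Reducedness of $\cC_0$ gives $\#(L\cap\cC_0)=d=\deg\cC_\varepsilon$, so $L\cap N$ is a union of $d$ disjoint disks each containing exactly one point of $L\cap\cC_\varepsilon$, whence $L\setminus N\hookrightarrow L\setminus\cC_\varepsilon$ is a homotopy equivalence. The composite $\pi_1(L\setminus N)\to\pi_1(\bp^2\setminus\cC_\varepsilon)$ is therefore onto and factors through $\pi_1(\bp^2\setminus N)\cong\pi_1(\bp^2\setminus\cC_0)$, which is the desired epimorphism. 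Note this argument needs no local analysis at the singular points and no equisingularity near $t=0$.
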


\begin{cor}
Let $\cC$ be an Artal-Shirane curve of type $(d;\mathbf{a},\mathbf{b},\mathbf{c})$ where either $d>3$ or 
$(d;\mathbf{a},\mathbf{b},\mathbf{c})\neq(3;(3),(3),(3))$ (with non-aligned intersection points).
Then the fundamental group of $\cC$ is abelian.
\end{cor}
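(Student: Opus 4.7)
The plan is to reduce to the computations already carried out in Theorem~\ref{thm:sigmazeta} and Corollary~\ref{cor:sigma1} by collapsing the Artal-Shirane intersection pattern to the maximally degenerate one. Given an Artal-Shirane curve $\cC$ of type $(d;\mathbf{a},\mathbf{b},\mathbf{c})$, the approach is to build a one-parameter family $\{\cC_t\}_{t\in[0,1]}$ with $\cC_1=\cC$ and $\cC_0\in\hat{\Sigma}^{(d)}_\zeta$ for some $\zeta\in\TT_d^+$, such that the family is equisingular on $(0,1]$, and then to invoke Theorem~\ref{thm:deg2} to transfer abelianness from $\cC_0$ to $\cC$.

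To construct the family, the idea is to fix the three lines and continuously vary the smooth degree-$d$ component so that, on each line $L_\mathbf{a}$, the $n_1$ contact points of multiplicities $a_1,\dots,a_{n_1}$ coalesce into a single point of contact order $d$ as $t\to 0$, and analogously for $L_\mathbf{b}$ and $L_\mathbf{c}$. For $t\in(0,1]$ the contact pattern remains of type $(d;\mathbf{a},\mathbf{b},\mathbf{c})$, so the family is equisingular there, while at $t=0$ the limit $\cC_0$ is a smooth degree-$d$ curve with a single $d$-order tangency along each of the three lines, i.e.\ a member of $\hat{\Sigma}^{(d)}$.

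Once such a family is in hand, Theorem~\ref{thm:deg2} produces an epimorphism
\[
\pi_1(\bp^2\setminus\cC_0)\twoheadrightarrow\pi_1(\bp^2\setminus\cC).
\]
For $d>3$, Theorem~\ref{thm:sigmazeta} and Corollary~\ref{cor:sigma1} together guarantee $\pi_1(\bp^2\setminus\cC_0)\cong\ZZ^3$ regardless of the component $\hat{\Sigma}^{(d)}_\zeta$ the limit falls into, so the target group is abelian as a quotient. For $d=3$ and $(d;\mathbf{a},\mathbf{b},\mathbf{c})\neq(3;(3),(3),(3))$, the residual flexibility in the degeneration lets us route the family into the component $\hat{\Sigma}^{(3)}_{\zeta_3}$, whose fundamental group is $\ZZ^3$ via the Cremona equivalence with $\tilde{\Sigma}^{(3)}_{\zeta_3}$ recalled in \S\ref{sec:pi1}, and the same conclusion follows; the excluded type $(3;(3),(3),(3))$ is precisely the one for which no such collapse is possible (and for which Corollary~\ref{cor:sigma1} does not apply to the $\zeta=1$ stratum).

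The main obstacle I anticipate is producing the equisingular degeneration with the required endpoint, and in particular ensuring, in the $d=3$ case, that we have enough freedom to avoid $\hat{\Sigma}^{(3)}_1$. Once the continuous family is built, Theorem~\ref{thm:deg2} combined with Theorem~\ref{thm:sigmazeta} and Corollary~\ref{cor:sigma1} deliver the conclusion mechanically.
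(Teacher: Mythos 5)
Your proposal is correct and follows essentially the same route as the paper: degenerate the Artal-Shirane curve equisingularly (over $(0,1]$) to a curve of type $(d;(d),(d),(d))$ lying in some $\hat{\Sigma}^{(d)}_\zeta$ (with $\zeta\neq 1$ arranged when $d=3$), then apply Theorem~\ref{thm:deg2} together with Theorem~\ref{thm:sigmazeta} and Corollary~\ref{cor:sigma1} to conclude that $\pi_1(\bp^2\setminus\cC)$ is a quotient of $\ZZ^3$. The direction of the epimorphism and the treatment of the excluded case $(3;(3),(3),(3))$ with aligned flexes both match the paper's argument.
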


\begin{proof}
It is clear that we can deform in an equisingular curve such a curve to a curve of type 
$(d;(d),(d),(d))$ which is in some $\hat{\Sigma}^{(d)}_\zeta$; if $d=3$, then $\zeta\neq 1$.
The result is a direct consequence of Theorem~\ref{thm:sigmazeta}, Corollary~\ref{cor:sigma1}
and Theorem~\ref{thm:deg2}.
\end{proof}

Turning out attention back to the original equisingular family of irreducible curves $\Sigma^{(d)}$
the following question remains open.

\begin{question}
Given two irreducible curves $\cC_i\in \Sigma^{(d)}_{\zeta_i}$, $i=1,2$ where $\zeta_i\in \TT^+_d$, 
$\zeta_1\neq \zeta_2$.
Are the pairs $(\PP^2,\cC_1)$ and $(\PP^2,\cC_2)$ topologically equivalent?
Are their complements homeomorphic, or their fundamental groups isomorphic?
\end{question}

\bibliographystyle{amsplain}

\providecommand{\bysame}{\leavevmode\hbox to3em{\hrulefill}\thinspace}
\providecommand{\MR}{\relax\ifhmode\unskip\space\fi MR }
% \MRhref is called by the amsart/book/proc definition of \MR.
\providecommand{\MRhref}[2]{%
  \href{http://www.ams.org/mathscinet-getitem?mr=#1}{#2}
}
\providecommand{\href}[2]{#2}

\end{document}